\def\Ext{\operatorname{Ext}}
\def\Tor{\operatorname{Tor}}
\def\depth{\operatorname{depth}}
\def\pd{\operatorname{pd}}
\def\m{\mathfrak{m}}
\def\Hom{\operatorname{Hom}}
\def\syz{\Omega}
\def\tr{\operatorname{Tr}}
\def\lhom{\operatorname{\underline{Hom}}}
\def\grade{\operatorname{grade}}
\DeclareMathOperator{\md}{\operatorname{\mathsf{mod}}}
\theoremstyle{plain} 
\theoremstyle{definition}
\newtheorem{thm}{Theorem}[section]
\newtheorem{cor}[thm]{Corollary}
\theoremstyle{definition}
\newtheorem{lem}[thm]{Lemma}
\newtheorem{dfn}[thm]{Definition}
\newtheorem{eg}[thm]{Example}
\newtheorem{conj}[thm]{Conjecture}
\newtheorem{rmk}[thm]{Remark}
\numberwithin{equation}{section}
\newcommand{\fm}{\mathfrak{m}}
\newcommand{\CC}{\mathbb{C}}
\newcommand{\ZZ}{\mathbb{Z}}
\newtheorem{chunk}[thm]{\hspace*{-1.065ex}\bf}
\def\H{\operatorname{\mathsf{H-dim}}}
\def\CI{\operatorname{\mathsf{CI-dim}}}
\def\G-dim{\operatorname{\mathsf{G-dim}}}
\def\pd{\operatorname{\mathsf{pd}}}
\def\id{\operatorname{\mathsf{id}}}
\def\Gid{\operatorname{\mathsf{Gid}}}
\def\syz{\Omega}
\def\Tr{\mathsf{Tr}\hspace{0.01in}}
\def\depth{\operatorname{\mathsf{depth}}}
\def\Ext{\operatorname{\mathsf{Ext}}}
\def\Hom{\operatorname{\mathsf{Hom}}}
\def\Tor{\operatorname{\mathsf{Tor}}}
\def\urltilda{\kern -.15em\lower .7ex\hbox{\~{}}\kern .04em}
\def\urldot{\kern -.10em.\kern -.10em}\def\urlhttp{http\kern -.10em\lower -.1ex
\hbox{:}\kern -.12em\lower 0ex\hbox{/}\kern -.18em\lower 0ex\hbox{/}}
\begin{document}

\title[On the ideal case of a conjecture of Auslander and Reiten]{On the ideal case of a conjecture of \\ Auslander and Reiten}

\author[O. Celikbas]{Olgur Celikbas}
\address{
Department of Mathematics \\
West Virginia University\\
Morgantown, WV 26506 U.S.A}
\email{olgur.celikbas@math.wvu.edu}

\author[K. Iima]{Kei-ichiro Iima}
\address{Department of Liberal Studies, National Institute of Technology, Nara College, 22 Yata-cho, Yamatokoriyama, Nara 639-1080, Japan}
\email{iima@libe.nara-k.ac.jp}

\author[A. Sadeghi]{Arash Sadeghi}
\address{School of Mathematics, Institute for Research in Fundamental Sciences (IPM),
P.O. Box: 19395-5746, Tehran, Iran.}
\email{sadeghiarash61@gmail.com}

\author[R. Takahashi]{Ryo Takahashi}
\address{Graduate School of Mathematics, Nagoya University, Furocho, Chikusaku, Nagoya 464-8602, Japan}
\email{takahashi@math.nagoya-u.ac.jp}
\urladdr{http://www.math.nagoya-u.ac.jp/~takahashi/}

\subjclass[2000]{13D07, 13H10}

\keywords{Auslander and Reiten conjecture, vanishing of Ext, $\fm$-full ideals.}

\thanks{Sadeghi's research was supported by a grant from IPM}

\date{\today}

\begin{abstract}
A celebrated conjecture of Auslander and Reiten claims that a finitely generated module $M$ that has no extensions with $M\oplus \Lambda$ over an Artin algebra $\Lambda$ must be projective. This conjecture is widely open in general, even for modules over commutative Noetherian local rings. Over such rings, we prove that a large class of ideals satisfy the extension condition proposed in the aforementioned conjecture of Auslander and Reiten. Along the way we obtain a new characterization of regularity in terms of the injective dimensions of certain ideals.
\end{abstract}

\maketitle{}

\setcounter{tocdepth}{1}

\section{Introduction}

Motivated by a conjecture of Nakayama \cite{Nak}, Auslander and Reiten \cite{AR} proposed the following conjecture, which is called the \emph{generalized Nakayama conjecture}:
\begin{conj} If $\Lambda$ is an Artin algebra, then every indecomposable injective $\Lambda$-module occurs as a direct summand in one of the terms in the minimal injective resolution of ${\Lambda}_{\Lambda}$.
\end{conj}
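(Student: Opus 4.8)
The statement to be proved is the \emph{generalized Nakayama conjecture}: over an Artin algebra $\Lambda$, every indecomposable injective $\Lambda$-module occurs as a summand of some term in the minimal injective resolution of ${}_\Lambda\Lambda$ (equivalently, of $\Lambda_\Lambda$ on the correct side). Although this is stated in the introduction as the motivating conjecture, in the context of this paper it is not proved in full generality; what the paper actually establishes is a special case of the closely related Auslander--Reiten condition for $\fm$-full ideals over commutative Noetherian local rings. So the honest plan is to explain how the conjecture \emph{reduces} to the Auslander--Reiten-type extension vanishing that the paper handles, and to flag that the general Artin-algebra case remains open.

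First I would recall the standard reformulation, due to Auslander and Reiten, that connects the generalized Nakayama conjecture to a statement about vanishing of $\Ext$. An indecomposable injective $E(\Lambda/\fn)$ (for $\fn$ a maximal two-sided ideal, or a simple module $S$) fails to appear in the minimal injective coresolution of $\Lambda$ precisely when a certain self-orthogonality condition holds for an associated module; more precisely, Auslander and Reiten show that the generalized Nakayama conjecture is equivalent to the assertion that there is no finitely generated $\Lambda$-module $M$ with $\Ext^i_\Lambda(M, M\oplus\Lambda)=0$ for all $i>0$ other than the projectives (the \emph{Auslander--Reiten condition}), at least after passing through their reduction; I would cite \cite{AR} for this equivalence. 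Thus the conjecture is a consequence of the Auslander--Reiten conjecture, and it suffices to verify the extension condition case by case on the relevant modules.

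Next, in the commutative Noetherian local setting $(R,\fm)$ that is the real subject of the paper, I would restrict attention to the class of modules for which the paper's machinery applies, namely $\fm$-full ideals $I$. The key step is to show that such an $I$ satisfying $\Ext^i_R(I, I\oplus R)=0$ for all $i>0$ must be free; this is exactly the ideal case of the Auslander--Reiten conjecture, and once established it feeds back (via the Auslander--Reiten reduction above) into the corresponding instance of the generalized Nakayama conjecture. The argument proceeds by first using $\fm$-fullness to control the socle and the minimal number of generators under the reduction modulo a generic element, then leveraging the vanishing $\Ext^{>0}_R(I,R)=0$ to force $I$ to be a so-called totally reflexive (Gorenstein-dimension zero) module, and finally combining $\Ext^{>0}_R(I,I)=0$ with the depth formula to conclude that $\pd_R I<\infty$, hence $I$ is free. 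Along the way the characterization of regularity in terms of the injective dimension of certain ideals (obtained elsewhere in the paper) is what upgrades ``finite projective dimension'' to ``free''.

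\textbf{Main obstacle.} The genuine difficulty is that the generalized Nakayama conjecture for arbitrary Artin algebras does \emph{not} follow from the techniques here: those techniques are commutative and rely essentially on $\fm$-fullness, a notion with no obvious noncommutative analogue. So the realistic target is the reduction plus the commutative ideal case, and the hard technical point within that is controlling the behaviour of $I$ under reduction modulo a superficial element while simultaneously tracking the $\Ext$-vanishing; this is where $\fm$-fullness does the heavy lifting, and it is the step I expect to require the most care. A secondary subtlety is ensuring the passage from the local ring to its completion (to invoke existence of a canonical module and the usual duality arguments) does not disturb either the $\fm$-full hypothesis or the $\Ext$-vanishing, both of which behave well under completion but require a short verification.
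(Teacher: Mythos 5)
The statement you were asked about is Conjecture 1.1 of the paper, the generalized Nakayama conjecture, and the paper does not prove it: it is stated purely as motivation, is equivalent (by Auslander--Reiten) to Conjecture 1.2, and remains open even for commutative Gorenstein local rings. Your decision not to manufacture a ``proof'' and instead to describe the reduction to the Auslander--Reiten-type $\Ext$-vanishing condition, plus the paper's partial commutative results, is the only honest reading, and in that sense your proposal matches the paper's actual content: the paper only establishes the vanishing condition of Conjecture 1.3 for weakly $\fm$-full (or integrally closed) ideals $I$ with $I:\fm\neq I$ over a commutative Noetherian local ring.

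Two caveats on your sketch of the commutative part. First, the paper's argument is not the one you outline: it does not reduce modulo a superficial or generic element, does not pass to the completion to use a canonical module or duality, and does not first force $I$ to be totally reflexive from $\Ext^{>0}_R(I,R)=0$. Instead it runs through Burch's theorem, which makes $R/I$ a $2$-Tor-rigid test module when $\fm(I:\fm)\neq\fm I$, combined with Auslander--Bridger transpose and syzygy techniques (the exact sequence relating $\Ext^n_R(M,-)$ to $\Tor_1^R(\Tr\syz^n M,-)$, stable Hom identities, and properties of homological dimensions $\H$); the conclusion of Corollary 3.11 is in fact stronger than the Auslander--Reiten condition, needing only $\Ext^n_R(I,I)=\Ext^{n+1}_R(I,I)=0$ with no hypothesis on $\Ext^i_R(I,R)$. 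Second, your claim that the commutative ideal case ``feeds back'' into the corresponding instance of the generalized Nakayama conjecture overstates the link: the Auslander--Reiten equivalence is between the two conjectures taken over the class of Artin algebras, so verifying the $\Ext$-vanishing condition for ideals over a commutative Noetherian local ring does not by itself yield any instance of Conjecture 1.1, which concerns Artin algebras and stays untouched by the paper's results.
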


Auslander and Reiten \cite{AR} proved that the Generalized Nakayama Conjecture is true if and only if the following conjecture is true:

\begin{conj} \label{conj2}
If $\Lambda$ is an Artin algebra, then every finitely generated $\Lambda$-module $M$ that is a generator (i.e., $\Lambda$ is a direct summand of a finite direct sum of copies of $M$) and satisfies $\Ext^{i}_{A}(M,M)=0$ for all $i\geq 1$ must be projective.
\end{conj}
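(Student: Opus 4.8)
Conjecture~\ref{conj2} is, as the preceding sentence records, equivalent over the class of all Artin algebras to the Generalized Nakayama Conjecture by a theorem of \cite{AR}; since the conjecture itself is open, the content one can realistically prove here is that equivalence, so this is what I would establish, following Auslander and Reiten. Fix an Artin algebra $\Lambda$ and a finitely generated generator $M$, and set $\Gamma=\operatorname{End}_\Lambda(M)^{\mathrm{op}}$ and $F=\Hom_\Lambda(M,-)\colon\mo\Lambda\to\mo\Gamma$. Everything runs on the standard behaviour of $F$ for a generator: it is fully faithful and left exact, it carries $\add M$ onto the projective $\Gamma$-modules, and it sends the injective $\Lambda$-modules in $\add M$ to projective $\Gamma$-modules --- to projective--injective ones once $M$ is moreover a cogenerator. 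Also, since $M$ generates, $\Lambda\in\add M$, so $\Ext^{i}_\Lambda(M,M)=0$ for all $i>0$ already forces $\Ext^{i}_\Lambda(M,\Lambda)=0$ for all $i>0$.

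\textbf{GNC $\Rightarrow$ Conjecture~\ref{conj2}.} Let $M$ be a self-orthogonal generator over $\Lambda$ and $0\to\Lambda\to I^{0}\to I^{1}\to\cdots$ the minimal injective resolution of ${}_\Lambda\Lambda$, with cosyzygies $C^{j}=\coker(I^{j-1}\to I^{j})$. Since $\Ext^{>0}_\Lambda(-,I^{j})=0$ and $\Ext^{>0}_\Lambda(M,\Lambda)=0$, dimension shifting gives $\Ext^{i}_\Lambda(M,C^{j})=0$ for all $i>0$, $j\ge 0$; hence $F$ keeps each sequence $0\to C^{j-1}\to I^{j}\to C^{j}\to 0$ exact, and splicing yields an exact complex $0\to F\Lambda\to FI^{0}\to FI^{1}\to\cdots$ of $\Gamma$-modules, in which $F\Lambda=\Hom_\Lambda(M,\Lambda)$ is projective. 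After upgrading $M$ to a generator that is simultaneously a cogenerator, via an auxiliary endomorphism-algebra construction set up so that the higher self-extensions are neither created nor destroyed, the terms $FI^{j}=\Hom_\Lambda(M,I^{j})$ become injective over $\Gamma$, $F$ preserves minimality, and the displayed complex becomes the minimal injective resolution of $F\Lambda$. Applying GNC for $\Gamma$ to this resolution, together with a comparison of the indecomposable injectives of $\Gamma$ with the $\Hom_\Lambda(M,E)$ for $E$ an indecomposable injective summand of some $I^{j}$, then forces $M\in\add\Lambda$, i.e.\ $M$ is projective over $\Lambda$; this last comparison is the crux, discussed below.

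\textbf{Conjecture~\ref{conj2} $\Rightarrow$ GNC.} I argue by contraposition. Suppose GNC fails for some $\Lambda$, so some indecomposable injective $E$ is absent from the minimal injective resolution $0\to\Lambda\to I^{0}\to I^{1}\to\cdots$ of ${}_\Lambda\Lambda$. Put $M=\Lambda\oplus\bigl(\bigoplus_{j}I^{j}\bigr)$ --- a finite direct sum, since only finitely many indecomposable injectives occur --- and $\Gamma=\operatorname{End}_\Lambda(M)^{\mathrm{op}}$. Then $\Gamma$ carries a finitely generated module $N$ (in Auslander--Reiten's construction a suitable submodule or quotient of $D\Gamma$ determined by the idempotent of $\Gamma$ cutting out the injective summands of $M$) which is a generator, since $\Lambda\in\add M$; is self-orthogonal, since its higher self-extensions are computed by $\Ext$'s landing in injective $\Lambda$-modules and hence vanish; and is not projective, precisely because the absence of $E$ makes the embedding $\mo\Lambda\hookrightarrow\mo\Gamma$ proper. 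This $N$ contradicts Conjecture~\ref{conj2} for $\Gamma$, finishing the equivalence.

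\textbf{The main obstacle.} The delicate step --- where I would spend most of the effort --- is the dictionary invoked in the first implication: determining exactly which indecomposable injective $\Gamma$-modules occur in the minimal injective resolution of the regular $\Gamma$-module in terms of the vanishing of $\Ext^{i}_\Lambda(M,M)$. Making this precise requires either the Morita--Tachikawa correspondence combined with Müller's theorem relating the dominant dimension of $\operatorname{End}_\Lambda(M)^{\mathrm{op}}$ to the range in which $\Ext^{i}_\Lambda(M,M)$ vanishes, or an explicit reduction from an arbitrary generator to a generator--cogenerator; carrying out that reduction with the handedness conventions and the minimality bookkeeping correct, without perturbing the self-extensions, is the genuinely technical core of the argument.
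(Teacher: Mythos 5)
The statement you were asked to prove is Conjecture \ref{conj2}, which the paper does not prove and does not claim to prove: it is an open conjecture, equivalent (by Auslander and Reiten \cite{AR}) to the Generalized Nakayama Conjecture, and the paper records it only as background before establishing much weaker special cases (Theorem \ref{l2} and its corollaries, which concern certain ideals over commutative local rings). So there is no proof in the paper to compare yours against. You correctly recognized this and did not claim to settle the conjecture; instead you sketched the Auslander--Reiten equivalence with GNC, a result the paper merely cites.

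As a proof of that equivalence, however, your sketch has genuine gaps, which you yourself flag. In the direction GNC $\Rightarrow$ Conjecture \ref{conj2}, the entire weight rests on ``upgrading $M$ to a generator--cogenerator via an auxiliary endomorphism-algebra construction so that the higher self-extensions are neither created nor destroyed'' and on the dictionary between the indecomposable injectives over $\operatorname{End}_\Lambda(M)$ occurring in the minimal injective resolution of the regular module and the vanishing of $\Ext^i_\Lambda(M,M)$; neither step is carried out, and this (in substance the Morita--Tachikawa correspondence together with M\"uller-type control of dominant dimension) is precisely the content of Auslander--Reiten's argument, not a routine verification. In the converse direction, the module $N$ is described only as ``a suitable submodule or quotient of $D\Gamma$,'' and the two decisive claims --- that its higher self-extensions vanish and that it fails to be projective exactly when the injective $E$ is absent from the resolution --- are asserted rather than proved. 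What you have, then, is an accurate road map of the known equivalence with the technically decisive steps left open; it is not a proof of the conjecture (nothing could be, at present), nor yet a complete proof of the equivalence it is modeled on.
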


Auslander, Ding and Solberg \cite{ADS} formulated the following conjecture, which is equivalent to Conjecture \ref{conj2} over Noetherian rings.

\begin{conj} \label{ARconj}
Let $M$ be a finitely generated left module over a left Noetherian ring $R$. If $\Ext^{i}_{R}(M,M\oplus R)=0$ for all $i\geq 1$, then $M$ is projective.
\end{conj}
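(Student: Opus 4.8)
Since Conjecture~\ref{ARconj} is wide open in general, the plan is to establish it for $M=I$ an ideal of a commutative Noetherian local ring $(R,\fm,k)$ lying in a class with good homological behaviour; I would work with $\fm$-full ideals, since these are abundant (integrally closed ideals when $k$ is infinite, powers of $\fm$, and more) and enjoy the crucial property that $\mu(J)\le\mu(I)$ whenever $I\subseteq J$. The first step is a bookkeeping reduction: as $I\subseteq R$, no free $R$-submodule of $R$ has rank exceeding one, so $I$ is projective if and only if $I=0$ or $I\cong R$, i.e.\ $I=(x)$ with $x$ a non-zerodivisor. Consequently a proper nonzero $\fm$-full ideal that happened to be free would satisfy $\mu(I)=1$, whereas $\fm$-fullness forces $\mu(I)\ge\mu(\fm)=\emb R$; hence when $\emb R\ge 2$ the conclusion ``$I$ projective'' coincides with ``$I=0$ or $I=R$,'' and when $\emb R\le 1$ the ring is a chain ring and the claim reduces to a direct computation with the ideals $\fm^{n}$. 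So I may assume $\emb R\ge 2$, with $I$ proper and nonzero, and aim for a contradiction.

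The next step extracts information from $\Ext^{i}_{R}(I,R)=0$ for all $i\ge 1$. Applying $\Hom_{R}(-,R)$ to $0\to I\to R\to R/I\to 0$ yields $\Ext^{i+1}_{R}(R/I,R)\cong\Ext^{i}_{R}(I,R)=0$ for $i\ge 1$, hence $\Ext^{j}_{R}(R/I,R)=0$ for all $j\ge 2$; since $\grade(I,R)=\inf\{j:\Ext^{j}_{R}(R/I,R)\neq 0\}$, this forces $\grade(I,R)\le 1$. The same long exact sequence presents $\Hom_{R}(I,R)$ as an extension of $\Ext^{1}_{R}(R/I,R)$ by $R/(0:_{R}I)$, which controls the bidual---hence the reflexive closure---of $I$. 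I would then split into the cases $\grade(I,R)=1$ and $\grade(I,R)=0$: in the first, $I$ contains a non-zerodivisor and $\Hom_{R}(R/I,R)=0$, which streamlines the bookkeeping, while the second---an $\fm$-full ideal whose elements are all zerodivisors---is an irritant that I expect to eliminate by also invoking $\Ext^{i}_{R}(I,I)=0$ and the $\fm$-full hypothesis.

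The heart of the matter is to combine $\Ext^{i}_{R}(I,I)=0$ for all $i\ge 1$ with the above to force $R$ to be regular. Here I would feed both families of vanishing into the long exact sequences obtained by applying $\Hom_{R}(-,I)$ and $\Hom_{R}(-,R)$ to $0\to I\to R\to R/I\to 0$, compare with a minimal presentation $0\to\syz_{1}I\to R^{\mu(I)}\to I\to 0$, and propagate the self-$\Ext$ vanishing down a (complete) resolution so that its tail collapses, the upshot being finiteness of $\id_{R}I$ (or of $\id_{R}(R/I)$). At this point I invoke the criterion, announced in the abstract, that recognizes regular local rings through a finiteness property of injective dimensions of $\fm$-full ideals---whose hypothesis I will have just verified from the standing assumptions---to conclude that $R$ is regular. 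The endgame is then short: over a regular local ring $\pd_{R}I$ is finite, and if it equalled $n\ge 1$ a minimal free resolution $\cdots\to F_{1}\to F_{0}\to I\to 0$ would make $\Ext^{n}_{R}(I,I)$ surject onto $\Hom_{R}(F_{n},I)\otimes_{R}k\cong(I\otimes_{R}k)^{\rank F_{n}}\neq 0$ (minimality puts the incoming differential into $\fm$), contradicting $\Ext^{n}_{R}(I,I)=0$; thus $\pd_{R}I=0$, $I$ is free, $\mu(I)\le 1$, and this contradicts $\mu(I)\ge\emb R\ge 2$. Hence no proper nonzero such $I$ exists and $I$ is projective.

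The step I expect to be the principal obstacle is extracting finite injective dimension---equivalently, the regularity of $R$---from the vanishing of self-extensions: this is precisely where the hypothesis that $I$ is $\fm$-full must enter in an essential way, an arbitrary ideal being uncooperative, and it calls for a careful interplay between the combinatorial content of $\fm$-fullness (the identity $\fm I:_{R}x=I$ for a general $x\in\fm$, and the resulting inequality $\mu(J)\le\mu(I)$ for $I\subseteq J$) and the homological input. The grade-zero case is a secondary difficulty that appears to require the same circle of ideas.
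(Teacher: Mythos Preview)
The statement is an open conjecture; the paper does not prove it in general but only the special case where $M=I$ is a weakly $\fm$-full ideal of a commutative Noetherian local ring with $\depth(R/I)=0$ (equivalently $I:\fm\ne I$). Your proposal targets essentially the same special case, so the comparison is with the paper's proof of Corollary~\ref{cor2}. Your outline has a genuine gap at precisely the step you flag as the principal obstacle: there is no mechanism in the long exact sequences you describe for turning $\Ext^i_R(I,I\oplus R)=0$ into $\id_R I<\infty$. Those sequences only yield vanishing of $\Ext^i_R(I,-)$ against modules built from $I$ and $R$ (such as $R/I$), never against $k$ or an arbitrary test module, which is what finiteness of injective dimension requires. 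Moreover, the regularity criterion you plan to invoke needs the extra hypothesis $\depth(R/I)=0$; Example~\ref{eg1} gives an $\fm$-full ideal with $\id_R I<\infty$ over a non-regular ring when $\depth(R/I)=1$. In the paper that criterion (Corollary~\ref{cor3}) is in fact deduced \emph{from} the main theorem, so appealing to it here is circular unless you cite Goto--Hayasaka directly, and that still forces the depth-zero hypothesis you have not imposed.

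The paper's route is entirely different and never passes through injective dimension. The engine is Burch's theorem: when $\fm(I:\fm)\ne\fm I$ (guaranteed by weak $\fm$-fullness plus $\depth(R/I)=0$), two consecutive vanishings $\Tor^R_n(M,R/I)=\Tor^R_{n+1}(M,R/I)=0$ force $\pd_R M\le n$. The paper converts the hypothesis $\Ext^n_R(M,I)=\Ext^{n+1}_R(M,I)=0$ into $\Tor^R_1(\Tr\Omega^n M,R/I)=\Tor^R_2(\Tr\Omega^n M,R/I)=0$ via Auslander--Bridger transpose identities and the functorial exact sequence $\Tor^R_2(\Tr\Omega^n M,-)\to\Ext^n_R(M,R)\otimes_R-\to\Ext^n_R(M,-)\to\Tor^R_1(\Tr\Omega^n M,-)\to0$; Burch then gives $\pd_R(\Tr\Omega^n M)<\infty$, and a short argument with the depth formula yields $\pd_R M<n$. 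Taking $M=I$ gives freeness from just two consecutive self-$\Ext$ vanishings, and taking $M=k$ gives regularity; neither step uses the combinatorial $\mu$-inequality for $\fm$-full ideals that your plan leans on.
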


The case where the ring in Conjecture \ref{ARconj} is an Artin algebra is known as the \emph{Auslander--Reiten Conjecture}. Conjecture \ref{ARconj} is known to hold for several classes of rings, for example for Artin algebras of finite representation type \cite{AR}, however it is widely open in general, even for commutative Gorenstein local rings; see \cite{CelRyo}.

The purpose of this paper is to exploit a beautiful result of Burch \cite{B} and prove that a large class of \emph{weakly $\fm$-full} ideals satisfy the vanishing condition proposed in Conjecture \ref{ARconj} over commuative Noetherian local rings.
The definition of a weakly $\fm$-full ideal is given in Definition \ref{wmfulldfn}. Examples of weakly $\fm$-full ideals, in fact those $I$ with $I:\fm \neq I$, are abundant in the literature; see, for example, Examples \ref{egprime} and \ref{CGTTex}. The main consequence of our argument can be stated as follows; see Theorem \ref{l2} and Corollary \ref{cor2}.

\begin{thm} \label{thmint} Let $(R, \fm)$ be a commutative Noetherian local ring and let $I$ be a weakly $\fm$-full ideal of $R$ such that $I: \fm \neq I$ (or equivalently $\depth(R/I)=0$.) If $R$ is not regular, then $\Ext^n_R(I,I)$ and $\Ext^{n+1}_R(I,I)$ do not vanish simultaneously for any $n\geq 1$. 
\end{thm}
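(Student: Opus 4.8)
The plan is to reduce to a statement about the Burch index or, more precisely, to exploit Burch's theorem that for an $\fm$-full (or weakly $\fm$-full) ideal $I$ with $I:\fm\neq I$, the module $R/I$ — or the ideal $I$ itself viewed as an $R/I$-free-ish object — carries strong homological rigidity. Concretely, I would first translate the hypothesis $I:\fm\neq I$, which the statement tells us is equivalent to $\depth(R/I)=0$, and recall that for a weakly $\fm$-full ideal the short exact sequence $0\to I\to R\to R/I\to 0$ together with Burch's result forces a relation among $\Ext$-modules of $I$ and of $R/I$. The key input is presumably a lemma proved earlier (the "beautiful result of Burch") asserting that if $I$ is weakly $\fm$-full with $\depth(R/I)=0$, then $\fm$ (or a minimal generator of the socle of $R/I$) is a direct summand-related piece of a syzygy, giving that $R$ is a direct summand of $\syz^{j}(R/I)\otimes(\text{something})$, or more likely that $k=R/\fm$ is a direct summand of some $\syz^i(R/I)$ up to $\fm$-full manipulations. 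I expect the precise statement to be: $k$ is a direct summand of $\Tr\,\syz\,(R/I)$ or of $I/\fm I$ in an appropriate sense, which is exactly the mechanism Burch's lemma provides.

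Next I would argue by contradiction: suppose $\Ext^n_R(I,I)=0=\Ext^{n+1}_R(I,I)$ for some $n\ge 1$. Using the exact sequence $0\to I\to R\to R/I\to 0$ and the long exact sequences in $\Ext_R(-,I)$ and $\Ext_R(I,-)$, the vanishing of two consecutive $\Ext^n_R(I,I)$ propagates to vanishing of $\Ext$ between $I$ and $R/I$, and then between $R/I$ and $R/I$, in a narrow range of degrees. The point of taking two consecutive vanishing values is precisely to "absorb" the degree shift coming from the syzygy in that short exact sequence, so that one lands on $\Ext^{n}_R(R/I, R/I)=0$ or $\Ext^{n}_R(k, I)=0$ type statements. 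Once Burch's lemma is invoked — $k$ a direct summand of the appropriate syzygy module of $R/I$ — this forces $\Ext^{n}_R(k, I)=0$ or $\Ext^{n}_R(k,k)=0$ for that $n$ (and its neighbor), hence $\pd_R k<\infty$, i.e. $R$ is regular by Auslander–Buchsbaum–Serre, contradicting the hypothesis that $R$ is not regular.

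The main obstacle, and the step requiring the most care, is the passage from "$\Ext^n_R(I,I)$ and $\Ext^{n+1}_R(I,I)$ vanish" to a vanishing $\Ext$ statement that actually involves the residue field $k$. The two long exact sequences only give vanishing between $I$, $R$, and $R/I$; getting from there to $k$ is where the weakly $\fm$-full hypothesis and Burch's lemma must be used crucially — it is not a formal diagram chase. I would expect to need that $\fm I = \fm J$ for some parameter-ideal-type $J$, or the defining property in Definition \ref{wmfulldfn} that $(\fm I : x) = I$ for a suitable $x\in\fm$, in order to realize $k$ inside $\syz(R/I)$ or inside $I/\fm I$. A secondary subtlety is the bookkeeping at small $n$ (say $n=1$), where some of the long exact sequence terms degenerate and one must check directly that no spurious vanishing is introduced; but this should be routine once the main mechanism is set up. Finally, the "new characterization of regularity in terms of injective dimensions" mentioned in the abstract suggests that an alternative route runs through $\id_R(I)$: showing $\id_R(R/I)<\infty$ or $\id_R(I)<\infty$ forces regularity, and the two consecutive $\Ext$ vanishings feed into a finite injective dimension estimate via a Bass-numbers argument. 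Either route converges on Auslander–Buchsbaum–Serre as the endgame.
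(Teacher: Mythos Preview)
Your overall arc---assume two consecutive $\Ext^n_R(I,I)$ vanish, invoke Burch, conclude $\pd_R k<\infty$ and hence $R$ is regular---matches the paper's. But the mechanism you sketch has a genuine gap at the step you yourself flag as the ``main obstacle'': getting from $\Ext$-vanishing between $I$ and $I$ (or $R/I$) to a statement that feeds into Burch's theorem. The long exact sequences coming from $0\to I\to R\to R/I\to 0$ only relate $\Ext^\ast_R(I,I)$ to $\Ext^\ast_R(R/I,I)$ and $\Ext^\ast_R(I,R/I)$ modulo terms like $\Ext^n_R(I,R)$, over which you have no a priori control. Your guess that Burch's result says ``$k$ is a direct summand of some syzygy of $R/I$'' is the Dao--Kobayashi--Takahashi reformulation; it lives on the $\Tor$ side, and even granting it, applying it on the $\Ext$ side in the first variable only yields vanishing of isolated $\Ext^j_R(k,I)$, which does not force regularity. (Also, the condition $(\fm I:x)=I$ you cite is the definition of $\fm$-full, not weakly $\fm$-full; the latter is $\fm I:\fm=I$.)

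What the paper actually does is pass from $\Ext$ to $\Tor$ via Auslander--Bridger transpose theory. The key input is the functorial exact sequence
\[
\Tor_2^R(\Tr\Omega^n M,-)\to \Ext^n_R(M,R)\otimes_R(-)\to \Ext^n_R(M,-)\to \Tor_1^R(\Tr\Omega^n M,-)\to 0,
\]
together with an isomorphism $\Tor_j^R(X,N)\cong\Ext^1_R(\Tr\Omega X,\Omega^{j-1}N)$. With $N=R/I$ and $\Omega N=I$, the hypothesis $\Ext^n_R(M,I)=\Ext^{n+1}_R(M,I)=0$ is shown to force $\Tor_1^R(\Tr\Omega^n M,R/I)=\Tor_2^R(\Tr\Omega^n M,R/I)=0$. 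Now Burch's original 1968 result---which in the paper reads: if $\fm(I:\fm)\neq\fm I$ (guaranteed by weakly $\fm$-full plus $\depth(R/I)=0$) and two consecutive $\Tor_i^R(-,R/I)$ vanish, then $\pd$ is finite---gives $\pd(\Tr\Omega^n M)<\infty$, and a further argument (the paper's Theorem~\ref{l2}) yields $\pd_R M<n$. Taking $M=I$ gives $\pd_R(R/I)<\infty$, so $\Tor_i^R(k,R/I)=0$ for $i\gg 0$, and a second application of Burch's $\Tor$-rigidity to $M=k$ gives $\pd_R k<\infty$. The transpose step is the missing idea in your proposal; without it, the passage from $\Ext$-vanishing to the $\Tor$-rigidity input of Burch does not go through.
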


It is clear that, if $I$ is an ideal as in Theorem \ref{thmint} and $\Ext^{i}_{R}(I,I)=0$ for all $i\geq 1$, then $I$ is projective, i.e., $I \cong R$.
It is also worth noting that Theorem \ref{thmint} is sharp in the sense that the condition $I: \fm \neq I$ is necessary; see Examples \ref{eg1} and \ref{eg2}. 

We obtain several results on the vanishing of Ext as applications of Theorem \ref{thmint}; see Corollaries \ref{corthm}, \ref{cornew2} and \ref{cornew3}. Furthermore, motivated by a result of Goto and Hayasaka \cite{GH2}, we give a characterization of regularity in terms of the injective dimensions of weakly $\fm$-full ideals: if $(R, \fm)$ is a commutative Noetherian local ring and $I$ is a proper weakly $\fm$-full ideal of $R$ with $I: \fm \neq I$, then $R$ is regular if and only if $I$ has finite injective dimension; see Corollary \ref{cor3}.

\section{Main result}

Throughout $R$ denotes a commutative Noetherian local ring with unique maximal ideal $\fm$, and residue field $k=R/\fm$. Moreover $\md R$ denotes the category of finitely generated $R$-modules.

We record various preliminary results and prepare several lemmas to prove our main result, Theorem \ref{l2}.

\begin{chunk} \label{a1}
Let $M\in \md R$ be a module with a projective presentation $P_1\overset{f}{\rightarrow}P_0\rightarrow M\rightarrow 0$. Then the \emph{transpose} $\Tr M$ of $M$
is the cokernel of $f^{\ast}=\Hom(f,R)$ and hence is given by the exact sequence
$0\rightarrow M^*\rightarrow P_0^*\rightarrow P_1^*\rightarrow \Tr M\rightarrow 0$. We will use the following results in the sequel:
\begin{equation}\tag{\ref{a1}.1}
\text{ If } \Ext^j_R(M,R)=0 \text{ for some } j\geq 1, \text{then } \tr\syz^{j-1}M \text{ is stably isomorphic to } \syz \tr\syz^{j}M.
\end{equation}

Given an integer $n\geq 0$, there is an exact sequence of functors \cite[2.8]{AB}:
\begin{equation}\tag{\ref{a1}.2}
\Tor_2^R(\tr \syz^n M,-)\rightarrow(\Ext^n_R(M,R)\otimes_R-)\rightarrow
\Ext^n_R(M,-)\rightarrow \Tor_1^R(\tr \syz^n M,-)\rightarrow0.
\end{equation}
\end{chunk}

\begin{chunk} \label{Y39} Let $M,N \in \md R$. Then $\lhom(M,N)$ is defined to be the quotient of  $\Hom(M,N)$ by the $R$-homomorphims $M \to N$ factoring through some free $R$-module. It follows that there is an isomorphism $\Tor_1^R(\tr M ,N) \cong \lhom(M,N)$; see \cite[3.7 and 3.9]{Y}.
\end{chunk}

\begin{chunk}(Auslander and Reiten \cite[3.4]{AR}) \label{ARsyzy} If $M, N\in \md R$ are modules and $n\geq 1$ is an integer, then there is an isomorphism $\lhom(M,\syz^n N) \cong \lhom(\tr \syz \tr M,\syz^{n-1}N)$.
\end{chunk}

\begin{chunk} (Matsui and Takahashi \cite[2.7(2)]{sg}) \label{RH} If $X\in \md R$ and $0\to A \to B \to C \to 0$ is a short exact sequence in $\md R$, then there is an exact sequence in $\md R$ of the form:
$$\lhom(X,A) \to \lhom(X,B) \to \lhom(X,C) \to \Ext^1_R(X,A) \to \Ext^1_R(X,B) \to \Ext^1_R(X,C).$$
\end{chunk}

\begin{lem}\label{lem}
Let $R$ be a local ring and let $M,N\in \md R$ be modules. If $n\ge2$ is an integer, then there is an isomorphim:
$$
\Tor_n^R(M,N)\cong\Ext_R^1(\tr\syz M,\syz^{n-1}N).
$$
\end{lem}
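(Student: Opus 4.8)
The plan is to compute $\Tor_n^R(M,N)$ by shifting all the homological weight onto the second variable via dimension shifting and Auslander--Bridger duality, arriving first at a stable-hom group and then converting it into an $\Ext^1$ by means of \ref{RH}.

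\emph{Step 1 (reduction to a stable-hom group).} Since $\Tor_i^R(M,F)=0$ for every free module $F$ and every $i\ge 1$, the short exact sequences $0\to\syz^{j+1}N\to P_j\to\syz^jN\to 0$ coming from a free resolution of $N$ give, by repeated dimension shifting, $\Tor_n^R(M,N)\cong\Tor_1^R(M,\syz^{n-1}N)$. Applying \ref{Y39} with $\tr M$ in place of $M$, together with the fact that $\tr\tr M$ is stably isomorphic to $M$ (so that $\Tor_1^R$ is unchanged), yields $\Tor_1^R(M,\syz^{n-1}N)\cong\lhom(\tr M,\syz^{n-1}N)$. Since $n\ge 2$ we may feed this into \ref{ARsyzy}, taking the first argument to be $\tr M$ and the exponent to be $n-1\ge1$; using once more that $\tr\tr M$ is stably isomorphic to $M$ (hence $\tr\syz\tr\tr M$ is stably isomorphic to $\tr\syz M$, and $\lhom$ is insensitive to free summands), we obtain $\lhom(\tr M,\syz^{n-1}N)\cong\lhom(\tr\syz M,\syz^{n-2}N)$. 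Combining these, $\Tor_n^R(M,N)\cong\lhom(\tr\syz M,\syz^{n-2}N)$.

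\emph{Step 2 (conversion to $\Ext^1$).} Put $X=\tr\syz M$ and apply \ref{RH} to the short exact sequence $0\to\syz^{n-1}N\to P\to\syz^{n-2}N\to 0$ with $P$ free (for $n=2$ this is $0\to\syz N\to P_0\to N\to 0$). In the resulting six-term exact sequence the term $\lhom(X,P)$ is zero, because every homomorphism into a free module factors through a free module, and the term $\Ext^1_R(X,P)$ is a finite direct sum of copies of $\Ext^1_R(X,R)$. Hence, once we know that $\Ext^1_R(\tr\syz M,R)=0$, the exact sequence collapses to an isomorphism $\lhom(\tr\syz M,\syz^{n-2}N)\cong\Ext^1_R(\tr\syz M,\syz^{n-1}N)$; composing with Step 1 then gives the lemma.

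\emph{The main point.} The step I expect to carry the argument is the vanishing $\Ext^1_R(\tr\syz M,R)=0$. This holds because $\syz M$, being a first syzygy, is a submodule of a free module and therefore torsionless, and for any torsionless module $L$ one has $\Ext^1_R(\tr L,R)=0$ by the well-known exact sequence of Auslander and Bridger relating $L$, its biduality map $L\to L^{**}$, and $\Ext^1_R(\tr L,R)$, $\Ext^2_R(\tr L,R)$ \cite{AB}. Everything else is routine bookkeeping with dimension shifting and stable isomorphism. (Alternatively, one may first reduce to the case $n=2$ via $\Tor_n^R(M,N)\cong\Tor_2^R(M,\syz^{n-2}N)$ and then prove $\Tor_2^R(M,L)\cong\Ext^1_R(\tr\syz M,\syz L)$ for all $L$; the substance is identical.)
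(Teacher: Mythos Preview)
Your proof is correct and follows essentially the same route as the paper: dimension shift plus \ref{Y39} plus \ref{ARsyzy} to reach $\lhom(\tr\syz M,\syz^{n-2}N)$, then the six-term sequence of \ref{RH} applied to $0\to\syz^{n-1}N\to R^\oplus\to\syz^{n-2}N\to0$, with the two outer terms killed by $\lhom(-,R^\oplus)=0$ and $\Ext^1_R(\tr\syz M,R)=0$. The paper cites \cite[2.6]{AB} for the latter vanishing, whereas you spell it out via torsionlessness of $\syz M$; the content is the same.
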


\begin{proof}
There are isomorphisms:
\begin{align*}\tag{\ref{lem}.1}
 \Tor_n^R(M,N) & \cong \Tor_1^R(\tr(\tr M),\syz^{n-1}N)\\
& \cong \lhom(\tr M,\syz^{n-1}N) \\
& \cong \lhom(\tr\syz\tr(\tr M),\syz^{n-2}N)
\end{align*}
The first isomorphism of (\ref{lem}.1) follows from dimension shifting and the fact that $\tr(\tr M)$ is stably isomorphic to $M$. The second and the third isomorphisms follow from (\ref{Y39}) and (\ref{ARsyzy}), respectively.

Next consider the natural exact sequence:  $0 \to \syz^{n-1}N \to R^\oplus \to \syz^{n-2}N \to 0$. This gives, by (\ref{RH}), the following exact sequence:
\begin{equation}\notag{}
\lhom(\tr\syz M,R^\oplus) \to \lhom(\tr\syz M,\syz^{n-2}N) \to \Ext_R^1(\tr\syz M,\syz^{n-1}N) \to  \Ext_R^1(\tr\syz M,R^\oplus)
\end{equation}
It follows from the definition that $\lhom(\tr\syz M,R^\oplus)=0$. Furthermore, by \cite[2.6]{AB}, we have $\Ext_R^1(\tr\syz M,R^\oplus)=0$. This establishes the isomorphism:
\begin{equation}\tag{\ref{lem}.2}
\lhom(\tr\syz\tr(\tr M),\syz^{n-2}N) \cong \Ext_R^1(\tr\syz M,\syz^{n-1}N)
\end{equation}
Now (\ref{lem}.1) and (\ref{lem}.2) yield the claim.
\end{proof}

\begin{lem}\label{prop}
Let $R$ be a local ring and let $M,N\in \md R$ be modules.
\begin{enumerate}[\rm(i)]
\item If $\Ext_R^{1}(M,\syz N)=0$, then $\Tor_{2}^R(\tr\syz M,N)=0$.
\item If $\Ext_R^{2}(M,\syz N)=0$, then $\Tor_{1}^R(\tr\syz M,N)=0$.
\end{enumerate}
\end{lem}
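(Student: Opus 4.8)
The plan is to translate each conclusion into the vanishing of a stable Hom module, using the isomorphism $\Tor_1^R(\tr X,Y)\cong\lhom(X,Y)$ of (\ref{Y39}), and then to force that module to vanish by an elementary cohomological argument. One point worth flagging at the outset is that the two parts are not quite parallel: for (i) I can dimension-shift inside the \emph{second} variable of $\Tor$ (free modules are flat), whereas for (ii) I must dimension-shift inside the \emph{first} variable of $\Ext$, since there is no way to pass from $\Ext_R^n(M,\syz N)$ down to $\Ext_R^{n-1}(M,N)$ — free modules need not be injective. This is precisely why the hypotheses in (i) and (ii) involve $\Ext^1$ and $\Ext^2$ respectively.

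For (i), I would start from the canonical short exact sequence $0\to\syz N\to F\to N\to 0$ with $F$ free. Since $F$ is flat, the long exact sequence of $\Tor_\bullet^R(\tr\syz M,-)$ yields an isomorphism $\Tor_2^R(\tr\syz M,N)\cong\Tor_1^R(\tr\syz M,\syz N)$. Now apply the exact sequence of functors (\ref{a1}.2) with $n=1$ to the module $\syz N$: its tail reads $\Ext_R^1(M,\syz N)\to\Tor_1^R(\tr\syz M,\syz N)\to 0$, so $\Tor_1^R(\tr\syz M,\syz N)$ is a homomorphic image of $\Ext_R^1(M,\syz N)=0$. Hence $\Tor_2^R(\tr\syz M,N)=0$. (Alternatively, one may rewrite $\Tor_1^R(\tr\syz M,\syz N)\cong\lhom(\syz M,\syz N)$ via (\ref{Y39}) and observe that applying $\Hom_R(-,\syz N)$ to $0\to\syz M\to F_0\to M\to 0$, together with $\Ext_R^1(F_0,\syz N)=0$, realizes $\lhom(\syz M,\syz N)$ as a quotient of $\Ext_R^1(M,\syz N)$.)

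For (ii), I would first use (\ref{Y39}) to rewrite the target as $\Tor_1^R(\tr\syz M,N)\cong\lhom(\syz M,N)$, so it suffices to prove $\lhom(\syz M,N)=0$. Dimension-shifting in the first variable of $\Ext$ — i.e. using $0\to\syz M\to F'\to M\to 0$ with $F'$ free and $\Ext_R^{\ge 1}(F',-)=0$ — gives $\Ext_R^1(\syz M,\syz N)\cong\Ext_R^2(M,\syz N)=0$. Now feed the short exact sequence $0\to\syz N\to F\to N\to 0$ into (\ref{RH}) with $X=\syz M$: the resulting exact sequence contains the stretch $\lhom(\syz M,F)\to\lhom(\syz M,N)\to\Ext_R^1(\syz M,\syz N)$, and since $\lhom(\syz M,F)=0$ (a homomorphism into a free module factors through a free module) and $\Ext_R^1(\syz M,\syz N)=0$, we conclude $\lhom(\syz M,N)=0$, that is, $\Tor_1^R(\tr\syz M,N)=0$.

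There is no serious obstacle in either part; the one thing to be careful about is exactly the asymmetry highlighted above — running part (i)'s dimension shift for part (ii), or conversely, produces the wrong $\Ext$-degree, which is why the two cases must be handled by the two different devices (\ref{a1}.2) and (\ref{RH}).
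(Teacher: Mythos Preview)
Your proof is correct, and part (ii) is essentially identical to the paper's argument. Part (i), however, is handled differently: the paper invokes the exact sequence $0\to R^\oplus\to\tr\syz\tr\syz M\oplus R^\oplus\to M\to 0$ from \cite[2.21]{AB} to obtain a surjection $\Ext_R^1(M,\syz N)\twoheadrightarrow\Ext_R^1(\tr\syz\tr\syz M,\syz N)$, and then applies Lemma~\ref{lem} (whose proof in turn uses the Auslander--Reiten isomorphism of~\ref{ARsyzy}) to identify $\Ext_R^1(\tr\syz\tr\syz M,\syz N)$ with $\Tor_2^R(\tr\syz M,N)$. Your route is more elementary: a single Tor dimension shift along $0\to\syz N\to F\to N\to 0$ followed by one application of the functorial sequence~(\ref{a1}.2) suffices, avoiding both Lemma~\ref{lem} and the \cite[2.21]{AB} sequence entirely. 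The paper's approach has the virtue of making Lemma~\ref{lem} available as an independent tool, but for the lemma at hand your argument is shorter and uses strictly fewer ingredients.
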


\begin{proof}
(i) It follows from \cite[2.21]{AB} that there is an exact sequence:
$$
0 \to R^\oplus \to \tr\syz\tr\syz M\oplus R^\oplus \to M \to 0.
$$
This induces the surjection $\Ext_R^1(M,\syz N)\twoheadrightarrow \Ext_R^1(\tr\syz\tr\syz M,\syz N) \to 0$. As $\Ext_R^1(M,\syz N)$ vanishes by assumption, so does $\Ext_R^1(\tr\syz\tr\syz M,\syz N)$. Therefore, by Lemma \ref{lem}, we have  $\Tor_{2}^R(\tr\syz M,N) \cong \Ext_R^1(\tr\syz\tr\syz M,\syz N)=0$.

(ii) It follows from (\ref{RH}) that the natural exact sequence $0 \to \syz N \to R^\oplus \to N \to 0$ induces the exact sequence:
$$
\lhom(\syz M,R^\oplus)\to\lhom(\syz M,N)\to\Ext_R^1(\syz M,\syz N).
$$
We have, by definition, that $\lhom(\syz M,R^\oplus)=0$. Moreover $\Ext_R^1(\syz M,\syz N)\cong\Ext_R^2(M,\syz N)=0$ by our assumption. Therefore $\lhom(\syz M,N)$ vanishes and hence the result follows from (\ref{Y39}).
\end{proof}

\begin{lem} \label{lemma2rigid} Let $R$ be a local ring, $M,N\in \md R$ be modules and let $n\geq 1$ be an integer. If $\Ext_R^{n}(M,\syz N)=\Ext_R^{n+1}(M,\syz N)=0$, then $\Tor_{1}^R(\tr\syz^nM,N)=\Tor_{2}^R(\tr\syz^nM,N)=0$.
\end{lem}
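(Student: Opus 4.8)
The plan is to reduce the general statement to the case $n=1$, which is precisely Lemma \ref{prop}, by replacing $M$ with its $(n-1)$-st syzygy module. First I would observe that one may take $\syz^nM=\syz(\syz^{n-1}M)$, so that $\tr\syz^nM=\tr\syz(\syz^{n-1}M)$; the usual ambiguity in these choices (free summands) is harmless, since every $\Tor_i^R$ and every $\Ext_R^i$ with $i\ge1$, as well as the transpose up to projective equivalence, behave well with respect to adding and deleting free modules.

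Next I would record the standard dimension-shifting isomorphisms $\Ext_R^i(\syz L,-)\cong\Ext_R^{i+1}(L,-)$ for $i\ge1$ and $L\in\md R$. Iterating these gives $\Ext_R^1(\syz^{n-1}M,\syz N)\cong\Ext_R^n(M,\syz N)$ and $\Ext_R^2(\syz^{n-1}M,\syz N)\cong\Ext_R^{n+1}(M,\syz N)$ (for $n=1$ these are plain equalities, with nothing to shift). Hence the hypotheses $\Ext_R^n(M,\syz N)=\Ext_R^{n+1}(M,\syz N)=0$ translate into the vanishing $\Ext_R^1(\syz^{n-1}M,\syz N)=\Ext_R^2(\syz^{n-1}M,\syz N)=0$.

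It then suffices to apply Lemma \ref{prop} to the pair $(\syz^{n-1}M,N)$: part (i), fed with the vanishing of $\Ext_R^1(\syz^{n-1}M,\syz N)$, yields $\Tor_2^R(\tr\syz(\syz^{n-1}M),N)=\Tor_2^R(\tr\syz^nM,N)=0$, while part (ii), fed with the vanishing of $\Ext_R^2(\syz^{n-1}M,\syz N)$, yields $\Tor_1^R(\tr\syz^nM,N)=0$. This completes the argument.

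There is no serious obstacle here; the only point needing a little care — rather than a genuine difficulty — is checking that the chosen syzygy of $M$ and the dimension-shifting identifications are compatible with the transpose construction and introduce no spurious free summands, which is routine because all the functors involved are stable under such modifications.
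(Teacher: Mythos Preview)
Your proposal is correct and follows exactly the paper's approach: replace $M$ by $\syz^{n-1}M$, use dimension shifting to reduce the hypotheses to $\Ext_R^{1}(\syz^{n-1}M,\syz N)=\Ext_R^{2}(\syz^{n-1}M,\syz N)=0$, and then invoke Lemma~\ref{prop}. Your additional remarks about stability under free summands are fine but not needed for the argument as stated in the paper.
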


\begin{proof} We replace $M$ with $\syz^{n-1}M$, and have $\Ext_R^{1}(\syz^{n-1} M,\syz N)=\Ext_R^{2}(\syz^{n-1}M,\syz N)=0$. Thus the result follows from Lemma \ref{prop}.
\end{proof}

\begin{dfn} \label{2rigid} Let $0\neq N\in \md R$. We say $N$ is 2-Tor-rigid if, whenever $0\neq M \in \md R$ and  $\Tor_{n}^R(M,N)=\Tor_{n+1}^R(M,N)=0$ for some $n\geq 0$, we have $\Tor_{i}^R(M,N)=0$ for all $i\geq n$.
\end{dfn}

There are various examples of 2-Tor-rigid modules in the literature. For example, over a hypersurface ring, each module in $\md R$ is 2-Tor-rigid \cite[1.9]{Mu}. Furthermore Dao \cite{Da1} pointed out that  finite length modules over codimension two complete intersection rings are 2-Tor-rigid. As Dao's manuscript \cite{Da1} is not published, we give a quick proof of this fact.

\begin{chunk} (Dao \cite[6.6]{Da1}) \label{Dao} Let $R$ be a complete intersection ring that is quotient of an unramified (e.g., equi-characteristic) regular local ring. Assume $R$ has codimension two. If $M\in \md R$ is a finite length module, then $M$ is 2-Tor-rigid.
\end{chunk}

\begin{proof} We may assume $R$ is complete so $R=S/(f,g)$, where $S$ is an unramified regular local ring and $\{f,g\}$ is a regular sequence on $S$ contained in the square of the maximal ideal of $S$. Let $M\in \md R$ be a finite length module and assume $\Tor^R_1(M,N)=\Tor^R_2(M,N)=0$ for some $N \in \md R$. Write $R=T/(g)$, where $T$ is the hypersurface ring $S/(f)$. It then follows from the standard long exact sequence \cite[11.64]{R} that $\Tor_2^S(M,N)=0$. Since $\dim_T(M)=\dim_R(M)=0$, and a finite length module over $T$ is Tor-rigid \cite[2.4]{HW}, we have $\Tor_i^S(M,N)=0$ for all $i\geq 2$. Using \cite[11.64]{R} once more, we conclude that $\Tor^R_i(M,N) \cong \Tor^R_{i+1}(M,N)$ for all $i\geq 1$. This gives the vanishing of $\Tor_i^R(M,N)$ for all $i\geq 1$.
\end{proof}

An important class of 2-Tor-rigid modules was determined by Burch \cite{B}. We record her result and use it for our proof of Corollary \ref{cor1}.

\begin{chunk} (Burch \cite[Theorem 5(ii), page 949]{B}) \label{Burc} Let $M\in \md R$ and let $I$ be a proper ideal of $R$. Assume $\fm (I:\fm)\neq \fm I$. If $\Tor_{n}^R(M,R/I)=\Tor_{n+1}^R(M,R/I)=0$ for some positive integer $n$, then $\pd(M)\leq n$.
\end{chunk}

The following consequence of Burch's theorem has been established in \cite{CelWag}. 

\begin{chunk} \label{Burc2} (Celikbas and Wagstaff \cite[2.3]{CelWag}) Let $M\in \md R$ and let $I$ be a proper ideal of $R$. Assume $I$ is  integrally closed and $\depth(R/I)=0$. If $\Tor_{n}^R(M,R/I)=\Tor_{n+1}^R(M,R/I)=0$ for some $n\geq 1$, then $\pd(M)\leq n$.
\end{chunk}

\begin{lem} \label{lemthm} Let $R$ be a local ring and let $M,N\in \md R$ be modules. Assume $N$ is nonfree and 2-Tor-rigid. If $\Ext_R^{n}(M,\syz N)=\Ext_R^{n+1}(M,\syz N)=0$ for some $n\geq 1$, then $\Tor_{j}^R(\tr\syz^nM,N)=0$ for all $j\geq 1$ and $\Ext^n_R(M,\syz^iN \oplus R)=0$ for all $i\geq 0$.
\end{lem}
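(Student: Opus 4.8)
The plan is to bootstrap from Lemma~\ref{lemma2rigid}. That lemma, applied to the hypothesis $\Ext_R^{n}(M,\syz N)=\Ext_R^{n+1}(M,\syz N)=0$, gives $\Tor_{1}^R(\tr\syz^nM,N)=\Tor_{2}^R(\tr\syz^nM,N)=0$. If $\tr\syz^nM=0$ the first assertion is vacuous; otherwise, since $N$ is nonfree and $2$-Tor-rigid, Definition~\ref{2rigid} upgrades this to $\Tor_{j}^R(\tr\syz^nM,N)=0$ for all $j\geq1$, which is precisely the first assertion. For the second assertion, since $\Ext^n_R(M,\syz^iN\oplus R)\cong\Ext^n_R(M,\syz^iN)\oplus\Ext^n_R(M,R)$, it suffices to prove $\Ext^n_R(M,\syz^iN)=0$ for all $i\geq0$ together with $\Ext^n_R(M,R)=0$.

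The key is to exploit the exact sequence of functors (\ref{a1}.2). First, dimension shifting along the free covers defining the syzygies of $N$ gives $\Tor_{j}^R(\tr\syz^nM,\syz^iN)\cong\Tor_{i+j}^R(\tr\syz^nM,N)$ for all $i\geq0$ and $j\geq1$, and the right-hand side vanishes by what was just proved; in particular $\Tor_{1}^R(\tr\syz^nM,\syz^iN)=\Tor_{2}^R(\tr\syz^nM,\syz^iN)=0$. Evaluating (\ref{a1}.2) at $\syz^iN$ therefore yields an isomorphism $\Ext^n_R(M,R)\otimes_R\syz^iN\cong\Ext^n_R(M,\syz^iN)$ for every $i\geq0$. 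Now take $i=1$: the right-hand side is $\Ext^n_R(M,\syz N)=0$ by hypothesis, so $\Ext^n_R(M,R)\otimes_R\syz N=0$. Since $N$ is nonfree we have $\syz N\neq0$, hence $\syz N$ surjects onto $k$; tensoring this surjection with $\Ext^n_R(M,R)$ shows $\Ext^n_R(M,R)\otimes_Rk=0$, so $\Ext^n_R(M,R)=0$ by Nakayama's lemma. Feeding this back into the displayed isomorphism gives $\Ext^n_R(M,\syz^iN)=0$ for all $i\geq0$, and combining the two vanishings completes the proof.

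This argument is chiefly a matter of assembling the preliminary results, so I do not expect a genuine obstacle. The one non-bookkeeping idea is the bootstrap in the second paragraph --- using the $i=1$ instance of the isomorphism coming from (\ref{a1}.2), rather than any direct computation, to force $\Ext^n_R(M,R)=0$ --- and the only points requiring a little care are the verification that $\syz N\neq0$ (this is exactly where the hypothesis that $N$ is nonfree enters) and the handling of the degenerate case $\tr\syz^nM=0$.
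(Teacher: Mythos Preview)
Your proof is correct and follows essentially the same route as the paper's own argument: apply Lemma~\ref{lemma2rigid}, invoke $2$-Tor-rigidity to get the full Tor vanishing, dimension-shift to $\syz^iN$, use (\ref{a1}.2) to obtain $\Ext^n_R(M,R)\otimes_R\syz^iN\cong\Ext^n_R(M,\syz^iN)$, and then specialize to $i=1$ to bootstrap $\Ext^n_R(M,R)=0$. You simply spell out a few points (the Nakayama step via $\syz N\twoheadrightarrow k$, the degenerate case $\tr\syz^nM=0$) that the paper leaves implicit.
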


\begin{proof} As $N$ is 2-Tor-rigid, Lemma \ref{lemma2rigid} implies that $\Tor_{j}^R(\tr\syz^nM,N)=0$ for all $j\geq 1$. Thus, given an integer $i\geq 0$, we have
$\Tor_2^R(\tr\syz^{n}M,\syz^iN)\cong\Tor_{2+i}^R(\tr\syz^{n}M,N)=0$ and $\Tor_1^R(\tr\syz^{n}M,\syz^iN)\cong\Tor_{1+i}^R(\tr\syz^{n}M,N)=0$. This yields, by the exact sequence (\ref{a1}.2), that:
\begin{equation} \tag{\ref{lemthm}.1}
\Ext_R^{n}(M,R)\otimes_R\syz^iN\cong\Ext_R^{n}(M,\syz^iN) \text{ for all } i\geq 0.
\end{equation}
Letting $i=1$, we obtain $\Ext_R^{n}(M,R)\otimes_R\syz N=0$, i.e., $\Ext_R^{n}(M,R)=0$. So (\ref{lemthm}.1) shows that $\Ext_R^{n}(M,\syz^iN)=0$ for all $i\geq 0$.
\end{proof}

We use properties of Gorenstein and complete intersection dimension for the rest of the paper. Therefore we recall the definitions of these homological dimensions.

\begin{chunk} \label{Gdim} (Auslander and Bridger \cite{AB})
A module $M\in \md R$ is said to be \emph{totally reflexive} if the
natural map $M \to M^{\ast\ast}$ is bijective and $\Ext_{R}^{i}(M,R)=0=\Ext_{R}^{i}(M^{\ast},R)$ for all $i\geq 1$. The infimum of $n$ for which there exists an exact sequence
$0 \to X_{n} \to \dots  \to X_{0} \to M\to 0,$
such that each $X_{i}$ is totally reflexive, is called the \emph{Gorenstein dimension} of $M$. If $M$ has Gorenstein dimension $n$, we write $\G-dim(M)=n$. Note, it follows by convention, that $\G-dim(0)=-\infty$.
\end{chunk}

\begin{chunk} \label{CIdim} (Avramov, Gasharov and Peeva \cite{AGP}) A diagram of local ring maps $R \to R' \twoheadleftarrow Q$ is called a {\em quasi-deformation} provided that $R \to R'$ is flat and the kernel of the surjection $R' \twoheadleftarrow Q$ is generated by a $Q$-regular sequence. The \emph{complete intersection dimension} of $M$ is:
\begin{equation*}
\CI(M) = \inf\{ \pd_Q(M \otimes_{R} R') - \pd_Q(R') \ | \
R \to R' \twoheadleftarrow Q \ {\text{is a quasi-deformation}}\}.
\end{equation*}
\end{chunk}

In the following $\H$ denotes a homological dimension of modules in $\md R$ that has the following properties; see \cite[3.1.2, 8.7 and 8.8]{Luchosurvey} for details.

\begin{chunk} \label{hdim} Let $M\in \md R$.
\begin{enumerate}[\rm(i)]
\item $\G-dim(M)\leq \H(M)\leq \CI(M) \leq \pd(M)$. \\If one of these dimensions is finite, then it equals the one on its left.
\item If $\H(M)<\infty$ and $M\neq 0$, then $\H(M)+\depth(M)=\depth(R)$.
\item If $\H(M)<\infty$, then $\H(M)=\sup \{i \in \ZZ: \Ext^i_R(M,R) \neq 0\}$.
\item If $\H(M)=0$, then $\Tr M = 0$ or $\H(\Tr M)=0$.
\item If $\H(M)<\infty$ and $0\neq N\in \md R$, then $\H(M)=\depth(N)-\depth(M\otimes_{R}N)$ provided that $\Tor_{i}^R(M,N)=0$ for all $i\geq 1$.
\end{enumerate}
\end{chunk}

The next result is important for our proof of Theorem \ref{l2}. 

\begin{chunk} (Celikbas, Gheibi, Sadeghi and Zargar \cite[5.8(1)]{CGSZ}) \label{CGSZ} 
Let $M\in \md R$ and let $n\geq 1$ be an integer. If $\G-dim(\Tr \syz ^n M)<\infty$, then $\Ext^i_R(\Tr \syz ^nM,R)=0$ for all $i=1, \dots, n$. 
\end{chunk}

Now we are ready to prove our main result:

\begin{thm}\label{l2}
Let $R$ be a local ring, $M,N \in \md R$ be modules and let $n\geq 1$ be an integer. Assume $N$ is not free and the following conditions hold:
\begin{enumerate}[\rm(i)]
\item $N$ is 2-Tor-rigid.
\item $\H(\tr\syz^n M)<\infty$.
\item $\Ext^n_R(M,\Omega N)=\Ext^{n+1}_R(M,\Omega N)=0$ for some $n\geq\depth(N)$.
\end{enumerate}
Then $\H(M)<n$.
\end{thm}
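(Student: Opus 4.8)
The plan is to extract the vanishing of $\Tor$ and $\Ext$ supplied by Lemma~\ref{lemthm}, then combine the resulting $\Tor$-vanishing (which bounds $\H(\tr\syz^nM)$ from above via \ref{hdim}(v)) with the $\Ext$-vanishing that comes from finiteness of Gorenstein dimension (via \ref{CGSZ}) in order to force $\H(\tr\syz^nM)=0$, and finally to descend this information down to $M$.

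First I would apply Lemma~\ref{lemthm} to $M$, $N$ and $n$: hypotheses (i) and (iii) are exactly its input (with $\syz N=\Omega N$), so it yields $\Tor_j^R(\tr\syz^nM,N)=0$ for all $j\geq1$ and $\Ext_R^n(M,\syz^iN\oplus R)=0$ for all $i\geq0$; in particular $\Ext_R^n(M,R)=0$. Since $\H(\tr\syz^nM)<\infty$ by (ii) and all the $\Tor$'s above vanish, property \ref{hdim}(v) gives $\H(\tr\syz^nM)=\depth N-\depth(\tr\syz^nM\otimes_RN)\leq\depth N\leq n$, where the last inequality uses the hypothesis $n\geq\depth N$ from (iii). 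On the other hand $\H(\tr\syz^nM)<\infty$ forces $\G-dim(\tr\syz^nM)<\infty$ by \ref{hdim}(i), so \ref{CGSZ} gives $\Ext_R^i(\tr\syz^nM,R)=0$ for $i=1,\dots,n$. Now \ref{hdim}(iii) identifies $\H(\tr\syz^nM)$ with $\sup\{i\in\ZZ:\Ext_R^i(\tr\syz^nM,R)\neq0\}$; since this is at most $n$ and all the $\Ext$'s in degrees $1,\dots,n$ vanish, it must be $\leq0$, so either $\tr\syz^nM=0$ or $\H(\tr\syz^nM)=0$.

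Next I would descend to $M$. If $\tr\syz^nM=0$ then $\syz^nM$ is free; if instead $\H(\tr\syz^nM)=0$, then applying \ref{hdim}(iv) to $\tr\syz^nM$ and using that $\tr(\tr\syz^nM)$ is stably isomorphic to $\syz^nM$ shows that $\syz^nM$ is free or $\H(\syz^nM)=0$. In the case that $\syz^nM$ is free we have $\pd M\leq n$, hence $\H(M)=\pd M$; and $\pd M=n$ is impossible, since then $\Ext_R^n(M,R)\neq0$ by minimality of the free resolution over a local ring, so $\H(M)=\pd M<n$. In the remaining case $\H(\syz^nM)=0$, the standard behaviour of $\H$ under syzygies gives $\H(M)\leq\H(\syz^nM)+n=n$, in particular $\H(M)<\infty$; then \ref{hdim}(iii) yields $\H(M)=\sup\{i\in\ZZ:\Ext_R^i(M,R)\neq0\}$, and since $\Ext_R^n(M,R)=0$ while $\H(M)\leq n$, this supremum is at most $n-1$. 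In every case $\H(M)<n$.

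I expect the heart of the argument to be the squeeze in the second paragraph: recognising that the $\Tor$-vanishing bounds $\H(\tr\syz^nM)$ above by $\depth N\leq n$ through \ref{hdim}(v), while finiteness of $\G-dim(\tr\syz^nM)$ forces $\Ext_R^i(\tr\syz^nM,R)$ to vanish in all degrees $i=1,\dots,n$ through \ref{CGSZ}, and that these two facts together with \ref{hdim}(iii) pin $\H(\tr\syz^nM)$ to $0$. The remaining issues—bookkeeping for the degenerate case $\tr\syz^nM=0$ (equivalently $\pd M\leq n$) and invoking the standard relation $\H(M)\leq\H(\syz^nM)+n$ for the homological dimension $\H$—should be routine.
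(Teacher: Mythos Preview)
Your proposal is correct and follows essentially the same route as the paper: apply Lemma~\ref{lemthm} to obtain $\Tor$-vanishing and $\Ext_R^n(M,R)=0$, use \ref{hdim}(v) to bound $\H(\tr\syz^nM)\le\depth N\le n$, invoke \ref{CGSZ} to kill $\Ext_R^i(\tr\syz^nM,R)$ for $i=1,\dots,n$, conclude $\H(\tr\syz^nM)=0$ via \ref{hdim}(iii), then pass to $\syz^nM$ via \ref{hdim}(iv) and finish with \ref{hdim}(iii) again. Your case analysis at the end is a bit more explicit than the paper's (you separate the case $\syz^nM$ free from $\H(\syz^nM)=0$, whereas the paper treats both at once by noting that either way $\H(M)\le n$), but the argument is the same.
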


\begin{proof} It follows from (i), (iii) and Lemma \ref{lemthm} that $\Tor_i^R(\tr\syz^n M, N)=0$ for all $i\geq 1$. Hence, in view of \ref{hdim}(v), we deduce from (ii) that
\begin{equation}\tag{\ref{l2}.1}
\H(\tr\syz^n M)=\depth(N)-\depth(\tr\syz^n M\otimes_RN).
\end{equation}
Note, since $\H(\tr\syz^n M)<\infty$, we have $\G-dim(\tr\syz^n M)<\infty$; see \ref{hdim}(i).
Therefore, by (\ref{CGSZ}), we know that $\Ext^i_R(\tr\syz^n M,R)=0$ for all $i=1, \ldots, n$. Moreover, by (\ref{l2}.1), $\H(\tr\syz^n M)\leq n$. Consequently we deduce $\H(\tr\syz^n M)=0$; see \ref{hdim}(iii).

Since $\tr\tr \syz^n M$ is stably isomorphic to $\syz^n M$, we see from \ref{hdim}(iv) that $\syz^n M=0$ or $\H(\syz^n M)=0$. In either case, this implies $\H(M)\leq n$. Furthermore we have $\Ext^n_R(M,R)=0$ by Lemma \ref{lemthm}. Thus we conclude that $\H(M)<n$; see (\ref{hdim})(iii).
\end{proof}

\section{Corollaries of Theorem \ref{l2}}

This section is devoted to various corollaries of Theorem \ref{l2}. In particular we prove Theorem \ref{thmint} which is advertised in the introduction; see Corollary \ref{cor2}. To prove Corollaries \ref{corthm}, \ref{cornew2} and \ref{cornew3}, we use Theorem \ref{l2} for the cases where $\H$ is the projective dimension $\pd$ and the complete intersection dimension $\CI$; see \ref{CIdim}. We are allowed to do that since these homological dimensions satisfy the conditions listed in \ref{hdim}; see \cite[1.2]{A2} and \cite[2.5]{AY}.

\begin{cor} \label{corthm} Let $R$ be a local ring and let $M,N \in \md R$ be nonfree modules.
Assume $N$ is $2$-Tor-rigid and $\depth(N)\le1$. Assume further $\pd(M^{\ast})<\infty$. Then $\Ext_R^{1}(M,\syz N)$ and $\Ext_R^{2}(M,\syz N)$ do not vanish simultaneously.
\end{cor}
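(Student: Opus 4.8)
The plan is to argue by contradiction and reduce everything to Theorem \ref{l2}, taking $\H=\pd$ and $n=1$. So one assumes $\Ext_R^{1}(M,\syz N)=\Ext_R^{2}(M,\syz N)=0$ and aims for a contradiction with $M$ being nonfree. Among the three hypotheses of Theorem \ref{l2}, condition (i) ($N$ is $2$-Tor-rigid) is given outright, and condition (iii) is satisfied with $n=1$ because the assumed vanishing is exactly $\Ext_R^{1}(M,\syz N)=\Ext_R^{2}(M,\syz N)=0$ and $1\ge\depth(N)$ by hypothesis. The entire content of the argument is therefore to verify condition (ii), i.e.\ $\pd(\tr\syz M)<\infty$, after which Theorem \ref{l2} delivers the contradiction immediately.

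To establish (ii) I would first feed the vanishing hypothesis into Lemma \ref{lemthm} with $n=1$: since $N$ is nonfree and $2$-Tor-rigid, that lemma yields $\Tor_j^R(\tr\syz M,N)=0$ for all $j\ge1$ and, crucially, $\Ext_R^{1}(M,R)=0$. With $\Ext_R^{1}(M,R)=0$ in hand, the case $j=1$ of (\ref{a1}.1) shows that $\tr M$ is stably isomorphic to $\syz\tr\syz M$. Separately, the exact sequence $0\to M^\ast\to P_0^\ast\to P_1^\ast\to\tr M\to 0$ recorded in \ref{a1} exhibits $M^\ast$ as, up to free summands, a second syzygy of $\tr M$; hence the standing hypothesis $\pd(M^\ast)<\infty$ forces $\pd(\tr M)<\infty$. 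Combining this with the stable isomorphism $\tr M\cong\syz\tr\syz M$ gives $\pd(\syz\tr\syz M)<\infty$ and therefore $\pd(\tr\syz M)<\infty$, which is precisely condition (ii). (Here one uses throughout that over a Noetherian local ring projective modules are free and that finiteness of $\pd$ is insensitive to free summands and to passing between a module and its syzygy.)

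At this point Theorem \ref{l2} applies with $\H=\pd$ and $n=1$ and yields $\pd(M)<1$; since $M$ is nonfree it is nonzero, so $\pd(M)=0$, i.e.\ $M$ is free, contradicting the hypothesis. Hence $\Ext_R^{1}(M,\syz N)$ and $\Ext_R^{2}(M,\syz N)$ cannot vanish simultaneously. The step I expect to require the most care is the verification of (ii): it is \emph{not} true in general that $\pd(M^\ast)<\infty$ alone forces $\pd(\tr\syz M)<\infty$ — one would additionally need $\Ext_R^{1}(M,R)$ to have finite projective dimension — so the argument is genuinely non-circular only because the assumed vanishing of $\Ext_R^{1}(M,\syz N)$ and $\Ext_R^{2}(M,\syz N)$ is first converted, via Lemma \ref{lemthm}, into the vanishing of $\Ext_R^{1}(M,R)$, which is exactly what makes $\tr M$ and $\syz\tr\syz M$ stably isomorphic and hence puts $\tr\syz M$ within reach of the hypothesis on $M^\ast$.
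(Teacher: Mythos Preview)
Your proof is correct and follows essentially the same route as the paper's: assume the vanishing, invoke Lemma \ref{lemthm} to obtain $\Ext_R^1(M,R)=0$, use (\ref{a1}.1) to get the stable isomorphism $\tr M\cong\syz\tr\syz M$, deduce $\pd(\tr\syz M)<\infty$ from $\pd(M^\ast)<\infty$, and then apply Theorem \ref{l2} with $\H=\pd$ and $n=1$ to reach the contradiction. Your closing remark about why $\Ext_R^1(M,R)=0$ is the pivotal input (and not a consequence of $\pd(M^\ast)<\infty$ alone) is a nice clarification that the paper leaves implicit.
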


\begin{proof} Suppose $\Ext_R^{1}(M,\syz N)=\Ext_R^{2}(M,\syz N)=0$. Then it follows from Lemma \ref{lemthm} that $\Ext_R^1(M,R)=0$. Thus $\tr M$ is stably isomorphic to $\Omega \Tr \Omega M$; see (\ref{a1}.1). Hence $\pd(M^*)<\infty$ if and only if $\pd(\tr M)<\infty$ if and only if $\pd( \Omega \tr \Omega M)<\infty $ if and only if $\pd( \tr \Omega M)<\infty$. Consequently Theorem \ref{l2} shows that $M$ is free, which contradicts our assumption. Therefore $\Ext_R^{1}(M,\syz N)$ and $\Ext_R^{2}(M,\syz N)$ do not vanish simultaneously.
\end{proof}

\begin{cor} \label{cornew2} Let $R$ be a local ring, $M,N \in \md R$ be nonfree modules and let $n\geq 1$ be an integer. Assume $N$ is $2$-Tor-rigid and $\grade(M)\geq n$. If either $\Hom(M,N)\neq 0$ or $\depth(N)\le n$, then $\Ext_R^{n}(M,\syz N)$ and $\Ext_R^{n+1}(M,\syz N)$ do not vanish simultaneously.
\end{cor}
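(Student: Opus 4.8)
The plan is to argue by contradiction: suppose $\Ext_R^{n}(M,\syz N)=\Ext_R^{n+1}(M,\syz N)=0$. The first step is to feed this vanishing into Lemma \ref{lemthm} (legitimate since $N$ is nonfree and $2$-Tor-rigid): it gives at once that $\Tor_{j}^{R}(\tr\syz^{n}M,N)=0$ for all $j\ge1$ and that $\Ext_R^{n}(M,\syz^{i}N\oplus R)=0$ for all $i\ge0$; in particular $\Ext_R^{n}(M,R)=0$. These are the only consequences of the vanishing hypothesis I will need.

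The second step brings in $\grade(M)\ge n$. This forces $\Ext_R^{i}(M,R)=0$ for $i=0,1,\dots,n-1$, and in particular (as $n\ge1$) $M^{\ast}=0$; together with the previous step we obtain $\Ext_R^{i}(M,R)=0$ for all $i=0,1,\dots,n$. Two things follow. First, $M^{\ast}=0$ collapses the exact sequence $0\to M^{\ast}\to P_{0}^{\ast}\to P_{1}^{\ast}\to\tr M\to0$ from \ref{a1} into a free resolution of $\tr M$ of length one, so $\pd(\tr M)\le1$. Second, the vanishing of $\Ext_R^{i}(M,R)$ for $1\le i\le n$ lets me iterate (\ref{a1}.1) to see that $\tr M$ is stably isomorphic to $\syz^{n}\tr\syz^{n}M$. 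Combining, $\pd(\syz^{n}\tr\syz^{n}M)<\infty$, hence $\pd(\tr\syz^{n}M)<\infty$.

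Now I split according to the two alternatives. If $\depth(N)\le n$, then all the hypotheses of Theorem \ref{l2} hold with $\pd$ playing the role of $\H$: $N$ is nonfree and $2$-Tor-rigid, $\pd(\tr\syz^{n}M)<\infty$, $n\ge\depth(N)$, and $\Ext_R^{n}(M,\syz N)=\Ext_R^{n+1}(M,\syz N)=0$. Hence $\pd(M)<n$; since $M$ is nonzero, the inequality $\grade(M)\le\pd(M)$ then gives $\grade(M)<n$, contradicting $\grade(M)\ge n$. If instead $\Hom_R(M,N)\ne0$: since $M^{\ast}=0$, every homomorphism from $M$ to a free module is zero, so no nonzero homomorphism $M\to N$ factors through a free module, whence $\lhom(M,N)=\Hom_R(M,N)\ne0$. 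By \ref{Y39} we get $\Tor_{1}^{R}(\tr M,N)\cong\lhom(M,N)\ne0$; on the other hand, using that $\tr M$ is stably isomorphic to $\syz^{n}\tr\syz^{n}M$, dimension shifting gives $\Tor_{1}^{R}(\tr M,N)\cong\Tor_{n+1}^{R}(\tr\syz^{n}M,N)$, and the latter vanishes by the first step. This contradiction completes the argument.

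I expect the only real point of substance to be the second step, that is, getting $\pd(\tr\syz^{n}M)<\infty$ out of the grade hypothesis; the mechanism is the clean observation that $\grade(M)\ge1$ already forces $M^{\ast}=0$ (hence $\pd(\tr M)\le1$), while $\grade(M)\ge n$ combined with $\Ext_R^{n}(M,R)=0$ identifies $\tr M$ with $\syz^{n}\tr\syz^{n}M$ up to stable isomorphism. Once that is set up, both cases close immediately --- the $\depth(N)\le n$ case by Theorem \ref{l2} and the inequality $\grade\le\pd$, the $\Hom_R(M,N)\ne0$ case by the $\Tor$-vanishing already obtained from Lemma \ref{lemthm} --- and the remaining care is purely in the stable-isomorphism and dimension-shift bookkeeping.
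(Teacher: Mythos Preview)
Your proof is correct and follows essentially the same route as the paper: assume the two Ext groups vanish, invoke Lemma~\ref{lemthm}, combine with the grade hypothesis to get $\Ext_R^{i}(M,R)=0$ for $0\le i\le n$, deduce $\pd(\tr\syz^{n}M)<\infty$, and then treat the two alternatives separately via Theorem~\ref{l2} (for $\depth N\le n$) and via the Tor-vanishing already obtained (for $\Hom(M,N)\ne0$). The only cosmetic differences are that the paper reads off $\pd(\tr\syz^{n}M)<\infty$ directly from the dualized free resolution $0\to F_0^{\ast}\to\cdots\to F_{n+1}^{\ast}\to\tr\syz^{n}M\to0$, whereas you reach it through $\pd(\tr M)\le1$ and the stable isomorphism $\tr M\simeq\syz^{n}\tr\syz^{n}M$; and the paper obtains $\Hom(M,N)=0$ from (\ref{a1}.2) with $n=0$, while you get the same contradiction via \ref{Y39} and a dimension shift.
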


\begin{proof}
We assume $\Ext_R^{n}(M,\syz N)=\Ext_R^{n+1}(M,\syz N)=0$ and show that $\Hom(M,N)=0$ and $\depth(N)>n$. 

Note, since $\grade(M)\geq n$, we have $\Ext^i_R(M,R)=0$ for all $i=0, \ldots, n-1$.
Furthermore it follows from Lemma \ref{lemthm} that $\Tor_{j}^R(\tr\syz^nM,N)=0$ for all $j\geq 1$ and $\Ext^n_R(M, R)=0$. Consequently $\Ext^i_R(M,R)=0$ for all $i=0, \ldots, n$.
This implies $\tr \syz^{i-1}M$ is stably isomorphic to $\syz \tr \syz^{i}M$ for all $i=1, \ldots, n$; see (\ref{a1}.1). Using this fact, we deduce the vanishing of $\Tor_{j}^R(\tr M,N)$, for all $j\geq 1$, from the vanishing of $\Tor_{j}^R(\tr\syz^nM,N)$. Now (\ref{a1}.2) gives $\Hom(M,N)=0$.

Now let $\cdots\rightarrow F_1\rightarrow F_0\rightarrow0$ be a free resolution of $M$. Since  $\Ext^i_R(M,R)=0$ for all $i=0, \ldots, n$, we obtain an exact sequence of the form:
$$0\rightarrow (F_0)^{\ast} \rightarrow \cdots \rightarrow (F_{n+1})^{\ast} \rightarrow\tr\syz^n M\rightarrow0.$$
Therefore $\pd(\tr\Omega^nM)<\infty$. Hence, if $\depth(N)\le n$, then it follows from  Theorem \ref{l2} that $\pd(M)<n$ so that the fact $n\leq \grade(M)\leq \pd(M)$ gives a contradiction. Consequently $\depth(N)>n$.
\end{proof}

In passing we obtain a result on the vanishing of Ext over complete intersections:

\begin{cor} \label{cornew3} Let $R$ be a complete intersection ring of codimension two that is quotient of an unramified regular local ring. If $M \in \md R$ is a module that is not maximal Cohen-Macaulay and $I$ is an $\fm$-primary ideal of $R$, then $\Ext_R^{1}(M,I)$ and $\Ext_R^{2}(M,I)$ do not vanish simultaneously.
\end{cor}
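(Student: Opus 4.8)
The plan is to specialize Theorem \ref{l2}, taking the homological dimension $\H$ to be complete intersection dimension $\CI$ and setting $N=R/I$. First I would dispose of the degenerate case $\dim R=0$: over an Artinian local ring every nonzero finitely generated module is maximal Cohen--Macaulay, so no $M$ as in the statement exists and there is nothing to prove. Hence I may assume $\dim R\geq1$; then the $\fm$-primary ideal $I$ is nonzero (if $I=0$ then $\sqrt{0}=\fm$ forces $\dim R=0$) and proper, so $N=R/I$ is a nonzero, non-free, finite length $R$-module, and $\syz(R/I)=I$ by the minimal presentation $0\to I\to R\to R/I\to0$.

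Next, arguing by contradiction, suppose $\Ext^1_R(M,I)=\Ext^2_R(M,I)=0$; I would verify the three hypotheses of Theorem \ref{l2} with $n=1$. Condition (i): $R$ is a codimension-two complete intersection which is a quotient of an unramified regular local ring and $R/I$ has finite length, so $R/I$ is $2$-Tor-rigid by \ref{Dao}. Condition (ii): since $R$ is a complete intersection, $\CI(X)<\infty$ for every $X\in\md R$ (use the quasi-deformation $R\to\widehat R\twoheadleftarrow Q$ of \ref{CIdim} with $Q$ regular local and $\widehat R=Q/(\underline{f})$, together with the fact that projective dimension over a regular local ring is always finite), so in particular $\CI(\tr\syz M)<\infty$ holds automatically. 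Condition (iii): since $\syz(R/I)=I$, the vanishing hypothesis is exactly $\Ext^1_R(M,\syz N)=\Ext^2_R(M,\syz N)=0$, and $n=1\geq0=\depth(R/I)=\depth N$, as required. Theorem \ref{l2} now gives $\CI(M)<1$, hence $\CI(M)=0$ (a module that is not maximal Cohen--Macaulay is nonzero); since $R$ is Cohen--Macaulay, the depth formula \ref{hdim}(ii) then yields $\depth M=\depth R=\dim R$, i.e.\ $M$ is maximal Cohen--Macaulay, a contradiction. Therefore $\Ext^1_R(M,I)$ and $\Ext^2_R(M,I)$ do not vanish simultaneously.

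I do not expect a genuine obstacle here: the corollary is essentially a direct reading of Theorem \ref{l2}. The only points demanding attention are bookkeeping ones — replacing $I$ by the syzygy $\syz(R/I)$ and observing that free summands are invisible to $\Ext^{\geq1}$, checking the numerical inequality $n\geq\depth N$, obtaining hypothesis (ii) for free from the finiteness of complete intersection dimension over a complete intersection, and excluding the zero-dimensional case so that the hypothesis ``$M$ is not maximal Cohen--Macaulay'' is a meaningful assumption rather than vacuously unsatisfiable.
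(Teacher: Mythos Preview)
Your proof is correct and follows essentially the same route as the paper's own argument: set $N=R/I$, use \ref{Dao} for $2$-Tor-rigidity, use finiteness of $\CI$ over a complete intersection for hypothesis (ii), and apply Theorem \ref{l2} with $n=1$ to force $\CI(M)<1$, contradicting the non-MCM assumption via \ref{hdim}(ii). You simply spell out more of the bookkeeping (the degenerate case $\dim R=0$, the identification $\syz(R/I)=I$, and the verification $n\ge\depth N$) than the paper does.
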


\begin{proof} Assume $M$ is not maximal Cohen-Macaulay and $I$ is $\fm$-primary. Set $N=R/I$. Then $N$ is a nonfree finite length module that is $2$-Tor-rigid; see (\ref{Dao}). If $\Ext_R^{1}(M,I)=\Ext_R^{2}(M,I)=0$, then it follows from Theorem \ref{l2} that $\CI(M)<1$, i.e., $M$ is maximal Cohen-Macaulay; see (\ref{hdim})(ii). Hence $\Ext_R^{1}(M,I)$ and $\Ext_R^{2}(M,I)$ do not vanish simultaneously.
\end{proof}

Before we giving the definition of a weakly $\fm$-full ideal, we record an example that shows the finiteness hypothesis of $\pd(\tr\syz^n M)$ is necessary in Theorem \ref{l2} to conclude $\pd(M)<n$.

\begin{eg} \label{exvanish} Let $R=\CC[[x,y,z]]/(xz-y^2,xy-z^2)$, $M=R/(x,z)$ and $N=R/(x,y)$. Then $R$ is a codimension two complete intersection ring of dimension one, and $M$ and $N$ are finite length $R$-modules such that $\pd(M)=\infty=\pd(N)$. Furthermore $\Ext^i_R(M,N)=0=\Tor_i^R(M,N)$ for all $i\geq 2$; see \cite[4.2]{Jor}. Hence, setting $J=\Omega N$, we obtain $\Ext_R^i(M,J)=0$ for all $i\geq 3$. 
Let $n\geq 1$ and set $X=\Omega^nM$. Note $X^{\ast} \neq 0$. Assume $\pd(\Tr X)<\infty$. Then  $\pd(X^{\ast})<\infty$ so that $X^{\ast}$ is free. Since $X \cong X^{\ast\ast}$, we see $X$ is free, i.e., $\pd(M)<\infty$. Therefore $\pd(\Tr\Omega^nM)=\infty$ for all $n\geq 1$.
\end{eg}

\begin{rmk} In Example \ref{exvanish}, we can use either Corollary \ref{corthm} or Corollary \ref{cornew3} to see $\Ext_R^{1}(M,J)$ and $\Ext_R^{2}(M,J)$ do not vanish simultaneously. In fact we have $\Ext_R^{1}(M,J)\neq 0=\Ext_R^{2}(M,J)$.
\end{rmk}

Recall that an ideal $I$ of a local ring $R$ is called $\fm$-full if $\fm I:x=I$ for some $x\in \fm$; see \cite[2.1]{Goto}. Properties of such ideals were extensively studied  in the literature. For example Goto established that $\fm$-full ideals are closely related to integrally closed ideals:

\begin{chunk} (Goto \cite[2.4]{Goto}) \label{GR}
Assume the residue field of $R$ is infinite. If $I$ is an integrally closed ideal of $R$, then $I=\sqrt{(0)}$ or $I$ is $\fm$-full.
\end{chunk}

A weakly version of $\fm$-full property can be defined as follows:

\begin{dfn} \label{wmfulldfn} Let $I$ be a proper ideal of a local ring $R$. We call $I$ a \emph{weakly $\fm$-full ideal} provided that $\fm I: \fm =I$, i.e., if $\fm x \subseteq \fm I$ for some $x\in R$, then $x\in I$.
\end{dfn}

\begin{eg} \label{egprime} Let $I \neq \fm$ be a prime ideal of $R$. Let $x\in \fm I: \fm$, $y\in \fm$ and $y \notin I$. Then $xy\in x \fm \subseteq \fm I \subseteq I$ and hence $x\in I$. Therefore $I$ is weakly $\fm$-full.
\end{eg}

We should remark that, due to its definition, it can be easily checked whether or not a given ideal is weakly $\fm$-full by using a computer algebra software such as Macaulay2 \cite{M2}.

\begin{rmk} \label{weaklyrmk} Let $I$ be a proper ideal of a local ring $R$. Then $\depth(R/I)=0$ if and only if $I:\fm \neq  I$. Hence it follows that, if $\depth(R/I)\geq 1$, then $I$ is weakly $\fm$-full. 
\end{rmk}

Although each $\fm$-full ideal is weakly $\fm$-full, many weakly $\fm$-full ideals that are not $\fm$-full exist. We record such an example next.

\begin{eg} (\cite{CGTT}) \label{CGTTex} Let $R=\CC[\![t^4,t^5,t^6]\!]$ and let $I=(t^4,t^{11})$. Then $I$ is a weakly $\fm$-full ideal which is not $\fm$-full such that $\depth(R/I)=0$.
\end{eg}

Let $I$ be a nonzero ideal of a local ring $R$. Assume $\fm (I:\fm)=\fm I$ and $I$ is weakly $\fm$-full. Let $x\in I:\fm$. Then $\fm x \in \fm(I:\fm)=\fm I$, i.e., $x\in \fm I:\fm$, which equals to $I$ since $I$ is weakly $\fm$-full. So we conclude:

\begin{chunk} \label{wmfull} Let $I$ be a nonzero ideal of a local ring $R$. Assume $I$ is weakly $\fm$-full and $\depth(R/I)=0$. Then $\fm (I:\fm)\neq \fm I$.
\end{chunk}

Next we note that weakly $\fm$-full ideals $I$ with $\depth(R/I)=0$ are $2$-Tor-rigid; see (\ref{2rigid}).

\begin{chunk} \label{wmfull2} Let $I$ be a nonzero ideal of a local ring $R$.  Assume $I$ is weakly $\fm$-full and $\depth(R/I)=0$. If $\Tor_{n}^R(M,R/I)=\Tor_{n+1}^R(M,R/I)=0$ for some $M\in \md R$ and for some integer $n\geq 1$, then $\pd(M)\leq n$; see (\ref{Burc}) and (\ref{wmfull}).
\end{chunk}

\begin{cor}\label{cor1}
Let $M \in \md R$ be a nonzero module and let $I$ be a nonzero ideal of $R$ such that $\depth(R/I)=0$. Assume $\Ext^n_R(M,I)=\Ext^{n+1}_R(M,I)=0$ for some $n\geq 1$. If $I$ is integrally closed or weakly $\fm$-full, then $\pd(M)<n$.
\end{cor}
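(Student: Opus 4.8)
The plan is to derive Corollary~\ref{cor1} from Theorem~\ref{l2}, taking $N:=R/I$ and letting $\H$ be the projective dimension $\pd$ (a legitimate choice of homological dimension, by \ref{hdim} and the references recorded just before it). I would first record the easy reductions: since $I$ is a nonzero \emph{proper} ideal --- it is proper because $\depth(R/I)=0$ --- the module $N=R/I$ is nonzero and nonfree, and because $I\subseteq\fm$ the exact sequence $0\to I\to R\to R/I\to0$ identifies $\syz N$ with $I$. Thus the hypothesis $\Ext^n_R(M,I)=\Ext^{n+1}_R(M,I)=0$ is exactly condition (iii) of Theorem~\ref{l2}, the requirement $n\ge\depth(N)$ there being automatic since $\depth(R/I)=0\le n$.

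Next I would verify condition (i), that $N=R/I$ is $2$-Tor-rigid; this is precisely where the hypothesis that $I$ is integrally closed or weakly $\fm$-full (together with $\depth(R/I)=0$) is used. If $0\neq M'\in\md R$ and $\Tor_k^R(M',R/I)=\Tor_{k+1}^R(M',R/I)=0$ for some $k\ge0$, then $k=0$ is impossible (it would force $M'=IM'=0$ by Nakayama), while for $k\ge1$ the results \ref{Burc2} (integrally closed case) and \ref{wmfull2} (weakly $\fm$-full case), both consequences of Burch's theorem \ref{Burc}, give $\pd(M')\le k$, whence $\Tor_i^R(M',R/I)=0$ for all $i\ge k$. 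Thus $R/I$ is $2$-Tor-rigid.

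The step I expect to require a little work is condition (ii), namely $\pd(\tr\syz^n M)<\infty$, which is \emph{not} among the hypotheses of Corollary~\ref{cor1}; I would obtain it by feeding the conclusion of Lemma~\ref{lemthm} back into Burch's theorem. Since $N=R/I$ is nonfree and $2$-Tor-rigid and $\Ext^n_R(M,\syz N)=\Ext^{n+1}_R(M,\syz N)=0$, Lemma~\ref{lemthm} gives $\Tor_j^R(\tr\syz^n M,R/I)=0$ for all $j\ge1$; in particular $\Tor_1^R(\tr\syz^n M,R/I)=\Tor_2^R(\tr\syz^n M,R/I)=0$. Applying \ref{Burc2} (resp. \ref{wmfull2}) once more, now to the module $\tr\syz^n M$, yields $\pd(\tr\syz^n M)\le1<\infty$ (trivially so if $\tr\syz^n M=0$).

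With conditions (i), (ii) and (iii) in hand, Theorem~\ref{l2} applied with $\H=\pd$ gives $\pd(M)<n$, which is the assertion. I do not anticipate a genuine obstacle here. The only point that takes a moment's thought is recognizing that, unlike in Corollary~\ref{corthm}, no finiteness hypothesis on $\tr\syz^n M$ (or on $M^{\ast}$) needs to be imposed: condition (ii) of Theorem~\ref{l2} is automatic in this setting, because the Tor-vanishing produced by Lemma~\ref{lemthm}, combined with the Burch-type rigidity of $R/I$, already forces $\pd(\tr\syz^n M)$ to be finite.
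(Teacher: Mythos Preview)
Your proposal is correct and follows essentially the same route as the paper's proof: set $N=R/I$, invoke Lemma~\ref{lemthm} to obtain $\Tor_j^R(\tr\syz^n M,R/I)=0$ for all $j\ge1$, feed this back into the Burch-type rigidity statements \ref{Burc2} and \ref{wmfull2} to get $\pd(\tr\syz^n M)\le1$, and then apply Theorem~\ref{l2} with $\H=\pd$. The paper's argument is terser but identical in substance; your version simply makes the verification of each hypothesis of Theorem~\ref{l2} (including the $2$-Tor-rigidity of $R/I$ and the automatic inequality $n\ge\depth(N)$) explicit.
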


\begin{proof} Setting $N=R/I$, we see from Lemma \ref{lemthm} that $\Tor_{i}^R(\tr\syz^nM,N)=0$ for all $i\geq 1$ and $\pd(\tr\syz^nM)\leq 1$; see \ref{Burc2} and (\ref{wmfull2}). So the result follows from Theorem \ref{l2}.
\end{proof}

We now establish Theorem \ref{thmint}, advertised in the introduction.

\begin{cor}\label{cor2}
Let $I$ be a proper ideal of a local ring $R$ such that $\depth(R/I)=0$, or equivalently $I:\fm \neq I$. Assume $I$ is integrally closed or weakly $\fm$-full. Then,
\begin{enumerate}[\rm(i)]
\item $\Ext^1_R(I,I)$ and $\Ext^{2}_R(I,I)$ do not vanish simultaneously unless $I \cong R$. 
\item $\Ext^n_R(I,I)$ and $\Ext^{n+1}_R(I,I)$ do not vanish simultaneously for any $n\geq 1$ unless $R$ is regular.
\end{enumerate}
\end{cor}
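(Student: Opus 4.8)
The plan is to derive both assertions directly from Corollary \ref{cor1}, applied with $M=I$. We may assume $I\neq 0$; this is part of the hypotheses on $I$, and is in any case automatic when $I$ is weakly $\fm$-full, since $\depth(R/I)=0$ then forces $\depth R=0$, while the zero ideal fails to be weakly $\fm$-full as soon as $\depth R=0$.

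For (i), suppose $\Ext^1_R(I,I)=\Ext^2_R(I,I)=0$. Corollary \ref{cor1}, with $M=I$ and $n=1$, gives $\pd(I)<1$, so $\pd(I)=0$; that is, $I$ is a free $R$-module. A nonzero free ideal has rank one (two distinct basis elements $e_1,e_2$ would yield the nontrivial relation $e_2\cdot e_1-e_1\cdot e_2=0$), and hence $I\cong R$, as claimed.

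For (ii), suppose $\Ext^n_R(I,I)=\Ext^{n+1}_R(I,I)=0$ for some $n\geq 1$; I must show $R$ is regular. Corollary \ref{cor1} gives $\pd(I)<n$, so $\pd(I)<\infty$, and then the exact sequence $0\to I\to R\to R/I\to 0$ shows $\pd(R/I)<\infty$. Put $m=\pd(R/I)+1$, so that $\Tor_m^R(k,R/I)=\Tor_{m+1}^R(k,R/I)=0$. Since $I$ is weakly $\fm$-full (respectively integrally closed) with $\depth(R/I)=0$, the conclusion of \ref{wmfull2} (respectively \ref{Burc2}) applies to the $R$-module $k$ and yields $\pd(k)\leq m<\infty$. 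Hence $R$ is regular, which is precisely assertion (ii).

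There is no substantial obstacle beyond Theorem \ref{l2} and Corollary \ref{cor1}; the only point worth flagging is that in (ii) the hypothesis on $I$ is invoked a \emph{second} time --- not to control the transpose $\tr\syz^n M$ as in Theorem \ref{l2}, but, via Burch's theorem (through \ref{wmfull2}/\ref{Burc2}, i.e.\ through the condition $\fm(I:\fm)\neq\fm I$; see \ref{wmfull}), to propagate finite projective dimension from $R/I$ to the residue field $k$. One should also not overlook the routine facts that $\pd(R/I)=\pd(I)+1$ for a proper nonzero ideal $I$ and that a nonzero free ideal is principal.
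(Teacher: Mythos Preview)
Your proof is correct and follows essentially the same route as the paper: apply Corollary~\ref{cor1} with $M=I$ to obtain $\pd(I)<n$, deduce $I$ is free for $n=1$, and for general $n$ use $\pd(R/I)<\infty$ together with a second application of Burch's theorem (via \ref{Burc2} or \ref{wmfull2}) to the residue field to force $R$ regular. The paper's own proof says exactly this, only more tersely; your version just spells out the routine steps (why a nonzero free ideal is isomorphic to $R$, the explicit choice of $m$) that the paper leaves implicit.
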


\begin{proof} Part (i) is clear from Corollary \ref{cor1}. Next we assume $\Ext^n_R(I,I)=\Ext^{n+1}_R(I,I)=0$ for some $n\geq 1$. Then it follows from Corollary \ref{cor1} that $\pd(R/I)<\infty$. Thus $\Tor_i^R(k,R/I)=0$ for all $i\gg 0$. This implies $\pd(k)<\infty$, i.e., $R$ is regular; see (\ref{Burc2}) and (\ref{wmfull2}). 
\end{proof}

\begin{rmk} The conclusion of Corollary \ref{cor2}(i) resembles a result of Jorgensen: if $R$ is a a local complete intersection ring and $M\in \md R$, then $\Ext^1_R(M,M)$ and $\Ext^{2}_R(M,M)$ do not vanish simultaneously unless $M$ is free; see \cite[2.5]{JorExt}. In that sense, a weakly $\fm$-full ideal $I$ with $I:\fm \neq I$ behaves like a nonfree module over a complete intersection ring.
\end{rmk}

The vanishing result obtained in Corollary \ref{cor2} is stronger than what is proposed in the Auslander and Reiten conjecture: if $I$ is an ideal as in Corollary \ref{cor2}, then we do not need the vanishing of $\Ext_R^i(I,R)$ for any $i\geq 1$, besides the vanishing of $\Ext_R^1(I,I)$ and $\Ext_R^2(I,I)$, to conclude $I$ is free; c.f., Conjecture (\ref{ARconj}). In fact, for such an ideal $I$, over a Cohen-Macaulay local ring that is not Gorenstein, there cannot be $\dim(R)+2$ consecuive vanishing of $\Ext_R^i(I,R)$; this is recorded in the next corollary.

\begin{cor} \label{ABS} Let $R$ be a $d$-dimensional Cohen-Macaulay local ring that is not Gorenstein, and let $I$ be a proper ideal of $R$ such that $\depth(R/I)=0$, or equivalently $I:\fm \neq I$. Assume $I$ is integrally closed or weakly $\fm$-full. Then, for all $n\geq 1$, there is an integer $i$ with $n\leq i \leq n+d+1$ such that $\Ext^i_R(I,R)\neq 0$. 
\end{cor}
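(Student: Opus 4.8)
\emph{Approach.} The plan is to argue by contradiction. Suppose that for some $n\ge 1$ one has $\Ext^i_R(I,R)=0$ for every $i$ with $n\le i\le n+d+1$; I will deduce that $R$ is Gorenstein (in fact regular), contrary to hypothesis. Since $I=\syz(R/I)$ gives $\Ext^i_R(I,R)\cong\Ext^{i+1}_R(R/I,R)$ for $i\ge 1$, the supposition amounts to $\Ext^j_R(R/I,R)=0$ for the $d+2$ consecutive values $j=n+1,\dots,n+d+2$. After a routine reduction we may assume that $R$ is complete and fix a canonical module $\omega$ of $R$ (in the integrally closed case one first replaces $I$ by a weakly $\fm$-full ideal by way of \ref{GR}, this property being stable under completion). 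Recall that $\omega$ is maximal Cohen--Macaulay, that $\id_R\omega=d$, and that $\rhom_R(\omega,\omega)\simeq R$.

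\emph{The key conversion.} The crucial step is to turn the $d+2$ consecutive vanishings of $\Ext^j_R(R/I,R)$ into two consecutive vanishings of $\Tor^R_m(R/I,\omega)$. The adjunction $\rhom_R(R/I,R)\simeq\rhom_R\bigl(R/I,\rhom_R(\omega,\omega)\bigr)\simeq\rhom_R\bigl(R/I\ltensor_R\omega,\ \omega\bigr)$ gives a hyper-Ext spectral sequence $E_2^{p,q}=\Ext^p_R\bigl(\Tor^R_q(R/I,\omega),\omega\bigr)\Rightarrow\Ext^{p+q}_R(R/I,R)$. When $I$ is $\fm$-primary each $\Tor^R_q(R/I,\omega)$ has finite length, so by local duality $E_2^{p,q}=0$ for $p\ne d$ while $E_2^{d,q}\cong\Tor^R_q(R/I,\omega)^{\vee}$; the spectral sequence degenerates and yields $\Ext^i_R(R/I,R)\cong\Tor^R_{i-d}(R/I,\omega)^{\vee}$ for $i\ge d$ and $\Ext^i_R(R/I,R)=0$ for $i<d$. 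Hence, if $n\le d-1$ then $d$ lies in the interval $[n+1,n+d+2]$, yet $\Ext^d_R(R/I,R)\cong(\omega/I\omega)^{\vee}\ne 0$ by Nakayama, a contradiction; and if $n\ge d$ then $\Tor^R_m(R/I,\omega)=0$ for every $m$ in $[\,n+1-d,\ n+2\,]$, a block of $d+2$ (hence at least two) consecutive indices whose smallest member $n+1-d$ is at least $1$. For a general ideal $I$ with $\depth(R/I)=0$ one uses instead that the modules $\Tor^R_q(R/I,\omega)$ have dimension at most $e:=\dim(R/I)$, so the spectral sequence lives in the half-plane $p\ge d-e$; analyzing its edge terms together with the identity $\grade(R/I)=d-e$ again extracts two consecutive vanishing $\Tor^R_m(R/I,\omega)$ from the prescribed run of $d+2$ consecutive vanishing $\Ext^j_R(R/I,R)$. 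Carrying out this dimension count with the spectral sequence --- so that the ``extra room'' $d+2$ rather than $2$ is precisely what is needed to get past the column $p=d-e$ --- is the main technical obstacle.

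\emph{Finishing the argument.} Once $\Tor^R_m(R/I,\omega)=\Tor^R_{m+1}(R/I,\omega)=0$ for some $m\ge 1$, the conclusion is immediate. By \ref{wmfull2} the module $R/I$ is $2$-Tor-rigid, so $\Tor^R_i(R/I,\omega)=0$ for all $i\ge m$; in particular $\Tor^R_m(\omega,R/I)=\Tor^R_{m+1}(\omega,R/I)=0$, whence the Burch-type assertion recorded in \ref{wmfull2} forces $\pd_R\omega\le m<\infty$. By the Auslander--Buchsbaum formula $\pd_R\omega=\depth R-\depth\omega=d-d=0$, so $\omega$ is free; since $\Hom_R(\omega,\omega)\cong R$, a free canonical module has rank one, so $\omega\cong R$ and $R$ is Gorenstein --- the required contradiction. (The same mechanism in fact yields $\pd_R(R/I)<\infty$, and then \ref{wmfull2} gives $\pd_R k<\infty$, so $R$ is even regular.)

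\emph{Why $d+2$.} It is worth noting why the bound is genuinely used. From $0\to I\to R\to R/I\to 0$ one gets $\Ext^{j+1}_R(I,I)\cong\Ext^j_R(I,R/I)$ whenever $\Ext^j_R(I,R)=\Ext^{j+1}_R(I,R)=0$, and under the same vanishing $(\ref{a1}.2)$ together with $(\ref{Y39})$ gives $\Ext^j_R(I,R/I)\cong\Tor^R_1(\tr\syz^j I,R/I)\cong\lhom(\syz^j I,R/I)$; one is thus tempted to produce two consecutive vanishing $\Ext^{j}_R(I,I)$ and invoke Corollary \ref{cor2}(ii). But under these hypotheses $\lhom(\syz^j I,R/I)$ is isomorphic to $\Ext^{j+1}_R(I,I)$ itself, so that route is circular: it is the detour through the canonical module of $R$, which encodes the Cohen--Macaulay structure, that actually lets the number $d+2$ do any work.
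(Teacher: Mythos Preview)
Your overall strategy coincides with the paper's: argue by contradiction, convert a run of $d+2$ consecutive vanishings of $\Ext^i_R(I,R)$ into two consecutive vanishings of $\Tor^R_m(-,N)$ for a module $N$ of finite injective dimension, and then invoke \ref{Burc2}/\ref{wmfull2} to force $\pd_R(N)<\infty$, contradicting non-Gorensteinness. The finishing step and the ``why $d+2$'' commentary are fine.

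The difference lies in the conversion step. The paper does not run a spectral sequence at all; it simply quotes \cite[Corollary~B.4]{ABS}, which says directly that if $R$ is Cohen--Macaulay of dimension $d$, $N$ has finite injective dimension, and $\Ext^i_R(M,R)=0$ for $n\le i\le n+d+1$, then $\Tor^R_n(M,N)=\Tor^R_{n+1}(M,N)=0$. Everything you are trying to extract from the hyper-Ext spectral sequence with $N=\omega$ is packaged in that citation.

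Your own derivation is complete only in the $\fm$-primary case. For general $I$ with $\depth(R/I)=0$ you write that ``carrying out this dimension count with the spectral sequence \ldots\ is the main technical obstacle'' and leave it there. Since the hypothesis $\depth(R/I)=0$ does not make $I$ $\fm$-primary, this is a genuine gap in your write-up: the heart of the proof is precisely the step you have not carried out. Either finish the spectral-sequence bookkeeping (bounding the columns to $d-e\le p\le d$ with $e=\dim R/I$ and tracking how $d+2$ consecutive zero abutments kill two adjacent $E_2^{d,\bullet}$ terms), or---more efficiently---cite \cite[Corollary~B.4]{ABS} as the paper does. A minor additional point: your reduction of the integrally-closed case to the weakly $\fm$-full case via \ref{GR} requires infinite residue field, so you should first pass to $R[x]_{\fm R[x]}$ before completing.
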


\begin{proof} Let $N\in \md R$ be a module of finite injective dimension. If, for some $n\geq 1$, we have $\Ext^i_R(I,R)=0$ for all $i=n, \ldots, n+d+1$, then \cite[Corollary B.4]{ABS} gives the vanishing of $\Tor_n^R(I,N)$ and $\Tor_{n+1}^R(I,N)$. Hence we conclude from \ref{Burc2} and \ref{wmfull2}  that $\pd(N)<\infty$ so that $R$ is Gorenstein. 
\end{proof}

Goto and Hayasaka \cite[2.2]{GH2} proved that if $R$ is a local ring and $I$ and $J$ are two ideals of $R$ such that $I\nsubseteq J$, $I:\fm \nsubseteq J$ and $J$ is $\fm$-full, then $R$ is regular provided that $\id(I)<\infty$. A special, albeit an important case of this result, namely the case where $I=J$, is:

\begin{chunk} (Goto and Hayasaka \cite[2.2]{GH2}) \label{GH} Let $R$ be a local ring and let $I$ be a proper ideal of $R$. Assume $\depth(R/I)=0$ and $I$ is $\fm$-full. If $\id(I)<\infty$, then $R$ is regular.
\end{chunk}

We use Corollary \ref{cor1} and improve \ref{GH} by replacing an $\fm$-full ideal with a weakly $\fm$-full one:

\begin{cor} \label{cor3} Let $R$ be a local ring and let $I$ be a proper ideal of $R$. Assume $\depth(R/I)=0$ and $I$ is weakly $\fm$-full. If $\id(I)<\infty$, then $R$ is regular.
\end{cor}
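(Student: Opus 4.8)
The plan is to reduce the statement to Corollary \ref{cor1} (equivalently Corollary \ref{cor2}(ii)), so that all the substance — Burch's rigidity theorem \ref{Burc} and Theorem \ref{l2} — is invoked rather than re-proved. First I would record that finiteness of $\id_R(I)$ forces large-degree vanishing of self-extensions of $I$: writing $c=\id_R(I)<\infty$, we have $\Ext^i_R(M,I)=0$ for every $M\in\md R$ and every $i>c$; in particular, taking $M=I$ and $n=c+1\geq 1$, both $\Ext^n_R(I,I)$ and $\Ext^{n+1}_R(I,I)$ vanish. (Note $I\neq 0$, so $c\geq 0$ and $n\geq 1$: the hypotheses $\depth(R/I)=0$ and $\fm I:\fm=I$ are incompatible with $I=0$, since the latter would read $0:\fm=0$, i.e.\ $\depth R\geq 1$, contradicting $\depth(R/I)=\depth R=0$.)

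Next I would apply Corollary \ref{cor1} with $M=I$, which is legitimate because $I$ is a nonzero ideal with $\depth(R/I)=0$ that is weakly $\fm$-full. Corollary \ref{cor1} then yields $\pd_R(I)<n<\infty$. From the short exact sequence $0\to I\to R\to R/I\to 0$ one concludes $\pd_R(R/I)<\infty$, and hence $\Tor^R_i(k,R/I)=0$ for all $i\gg 0$. Applying \ref{wmfull2} (which rests on Burch's theorem \ref{Burc} via \ref{wmfull}) to two consecutive such vanishings gives $\pd_R(k)<\infty$, i.e.\ $R$ is regular. Equivalently, once one knows $\Ext^n_R(I,I)=\Ext^{n+1}_R(I,I)=0$ for some $n\geq 1$, Corollary \ref{cor2}(ii) concludes directly that $R$ is regular.

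I do not anticipate a genuine obstacle: the only points that need a moment's care are the bookkeeping that the chosen index $n=c+1$ is at least $1$, the observation that $I$ is automatically nonzero under the stated hypotheses (so that Corollary \ref{cor1} applies), and the standard implication $\pd_R(I)<\infty\Rightarrow\pd_R(R/I)<\infty\Rightarrow R$ regular obtained from the rigidity of $R/I$ — exactly the final step in the proof of Corollary \ref{cor2}. This is also why the result genuinely extends \ref{GH}: the hypothesis that $I$ be $\fm$-full is relaxed to $I$ weakly $\fm$-full, and examples such as Example \ref{CGTTex} show this is a strictly larger class.
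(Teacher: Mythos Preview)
Your proof is correct and follows the same strategy as the paper: deduce consecutive vanishing of $\Ext^i_R(-,I)$ from $\id_R(I)<\infty$ and then invoke Corollary~\ref{cor1}. The paper is slightly more direct in that it applies Corollary~\ref{cor1} with $M=k$ rather than $M=I$, so that $\pd_R(k)<\infty$ and hence regularity follow in one step, without your detour through $\pd_R(R/I)<\infty$ and a second appeal to Burch rigidity via \ref{wmfull2} (which is exactly the argument you already recognize as the proof of Corollary~\ref{cor2}(ii)).
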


\begin{proof} As $\id_R(I)<\infty$ we have $\Ext^i_R(k,I)=0$ for all $i\gg0$. This implies, by Corollary \ref{cor1}, that $\pd(k)<\infty$, i.e., $R$ is regular.
\end{proof}

We can obtain a result similar to Corollary \ref{cor2} that characterizes Gorenstein rings in terms of the \emph{Gorenstein injective dimension} $\Gid(I)$ of weakly $\fm$-full ideals $I$; see \cite{Henrik} for the definition of Gorenstein injective dimension.

\begin{cor} Let $R$ be a local ring and let $I$ be a proper ideal. Assume $\depth(R/I)=0$ and $I$ is weakly $\fm$-full. If $\Gid(I)<\infty$ and $\dim(R/I)<\dim R$, then $R$ is Gorenstein.
\end{cor}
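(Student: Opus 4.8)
The plan is to reduce the statement to a single application of Corollary~\ref{cor1}, applied to a canonical module. First observe that $I\ne 0$: if $I=0$, then the defining equality $\fm I:\fm=I$ reads $(0:_R\fm)=0$, which forces $\fm\notin\Ass R$, hence $\depth R\ge 1$, contradicting $\depth(R/I)=\depth R=0$. Thus $I$ is a nonzero proper ideal with $\depth(R/I)=0$ that is weakly $\fm$-full, so \ref{wmfull} gives $\fm(I:\fm)\ne\fm I$; this is exactly the hypothesis under which Corollary~\ref{cor1} is available for the ideal $I$.

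The first substantive step is to show that $R$ is Cohen--Macaulay. For this I would invoke the Gorenstein counterpart of Bass's theorem, to the effect that a local ring possessing a nonzero finitely generated module of finite Gorenstein injective dimension is Cohen--Macaulay. To stay with rings admitting a dualizing complex, first pass to $\widehat R$: the ideal $\widehat I$ is again weakly $\fm$-full with $\depth(\widehat R/\widehat I)=0$ (colon ideals of finitely generated ideals are compatible with the faithfully flat extension $R\to\widehat R$), one has $\Gid_{\widehat R}(\widehat I)=\Gid_R(I)<\infty$, and $R$ is Gorenstein if and only if $\widehat R$ is. So, replacing $R$ by $\widehat R$, I may assume $R$ is a complete Cohen--Macaulay local ring; in particular $R$ admits a canonical module $\omega$, which satisfies $\id_R(\omega)=\dim R<\infty$.

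Now I convert the finiteness of $\Gid_R(I)$ into vanishing of Ext. By the standard description of Gorenstein injective dimension in terms of Ext against modules of finite injective dimension (see \cite{Henrik}), $\Gid_R(I)<\infty$ forces $\Ext^i_R(L,I)=0$ for every $i>\Gid_R(I)$ and every $L\in\md R$ with $\id_R(L)<\infty$. Taking $L=\omega$ and $n=\Gid_R(I)+1\ge 1$, we obtain $\Ext^n_R(\omega,I)=\Ext^{n+1}_R(\omega,I)=0$. Then Corollary~\ref{cor1}, applied to the nonzero finitely generated module $M=\omega$, gives $\pd_R(\omega)<n<\infty$. Finally, the Auslander--Buchsbaum formula yields $\pd_R(\omega)=\depth R-\depth_R(\omega)=\dim R-\dim R=0$, so $\omega$ is free; hence $\omega\cong R$, i.e.\ $R$ is Gorenstein.

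The step I expect to be the main obstacle is the reduction to the Cohen--Macaulay case: one must pin down the precise form of the Gorenstein version of Bass's theorem and verify the routine but necessary stability of $\Gid$, of the weakly $\fm$-full condition, and of $\depth(R/I)=0$ under completion. After that the argument is formal, with the entire homological content supplied by Corollary~\ref{cor1}; in the line of argument above the hypothesis $\dim(R/I)<\dim R$ enters only in guaranteeing that $I$ is a nonzero proper ideal with $R/I$ of strictly smaller dimension, though it may be used more substantively in an alternative route (for instance one passing through Matlis duality and the Gorenstein flat dimension of the Artinian module $\Hom_R(I,E_R(k))$).
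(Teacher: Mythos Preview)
Your outline is the paper's argument: reduce to the Cohen--Macaulay case, choose a nonzero finitely generated $M$ with $\id_R(M)<\infty$, use $\Gid_R(I)<\infty$ to obtain $\Ext^i_R(M,I)=0$ for $i\gg 0$ (this is \cite[2.22]{Henrik}), apply Corollary~\ref{cor1} to get $\pd_R(M)<\infty$, and conclude $R$ is Gorenstein from the existence of a module with both $\pd$ and $\id$ finite. Your choice $M=\omega$ after completion is a harmless specialization of this.

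The place where you are deliberately vague is exactly where the hypothesis $\dim(R/I)<\dim R$ does its real work. The Gorenstein analogue of Bass's theorem in the unrestricted form you state---that the mere existence of a nonzero finitely generated module of finite $\Gid$ forces $R$ to be Cohen--Macaulay---is not available, even after passing to a ring with a dualizing complex. What the paper invokes is Yassemi's theorem \cite[1.3]{Yas}: if $R$ admits a finitely generated module $N$ with $\Gid_R(N)<\infty$ \emph{and} $\dim_R(N)=\dim R$, then $R$ is Cohen--Macaulay. From $\dim(R/I)<\dim R$ and the exact sequence $0\to I\to R\to R/I\to 0$ one reads off $\dim(I)=\dim R$, so $N=I$ qualifies and the CM reduction goes through without completion. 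This is the substantive use of the hypothesis that you correctly suspected must exist; it is not merely ensuring $I\ne 0$.
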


\begin{proof}
Note that $\dim(I)=\dim R$. Therefore it follows from \cite[1.3]{Yas} that $R$ is Cohen-Macaulay and so there exists a module $0\neq M\in \md R$ such that $\id(M)<\infty$. Hence $\Ext^i_R(M,I)=0$ for all $i\gg 0$ \cite[2.22]{Henrik}. Now, by Corollary \ref{cor1}, we have  $\pd(M)<\infty$ and so $R$ is Gorenstein.
\end{proof}

Now we present several examples to prove the necessity of the assumptions of our results.

\begin{eg} \label{eg1} Let $R=\CC[[x,y,z]]/(xy-z^2)$ and let $I=(x)$ be the ideal of $R$. Then $R$ is a two-dimensional integrally closed Gorenstein domain that is not regular. This implies, since $I$ is principal, that $I$ is integrally closed. Therefore $I$ is $\fm$-full and hence weakly $\fm$-full; see (\ref{GR}). Moreover $\id(I)=2$ and  $\depth(R/I)=1$.
\end{eg}

\begin{eg}  \label{eg2}
Let $S=k[[t,u]]$ be a formal power series ring over a field $k$, and let $R=k[[t^3,t^4,t^5,u]]$ be a subring of $S$. Then the ideal $I=(t^3,t^4)$ of $R$ is a canonical module of $R$, which implies $\id(I)=2$ and $\Ext^i_R(I,I)=0$ for all $i\geq 1$. Since $\depth(R/I)=1$, the ideal $I$ is weakly $\m$-full; see (\ref{weaklyrmk}). In fact we can show $I$ is $\fm$-full but not integrally closed. Since $(t^5)^2-(t^3)^2t^4=0$, it is clear that $I=(t^3,t^4)$ is not an integrally closed ideal of $R=k[[t^3,t^4,t^5,u]]$. Considering the residue ring $R/\m I$, we can easily check that the equality $I=(\m I:u)$ holds. Hence $I$ is $\m$-full.
\end{eg}

\begin{eg}\label{it}
Let $S=k[[t]]$ be a formal power series ring over a field $k$, and let $R=k[[t^5,t^6,t^8,t^9]]$ be a subring of $S$.
Then $R$ is a non-Gorenstein Cohen--Macaulay local domain of dimension $1$.
Denote by $\m$ the maximal ideal of $R$.
Then $\m$ is integrally closed, and is (weakly) $\m$-full.
The ideal $I=(t^5,t^8,t^9)$ of $R$ is a canonical module of $R$, and we have $\id(I)=1$ and $\depth(R/I)=0$.
We have $\id(\m)=\infty$; this follows from the fact that $\m$ is an indecomposable maximal Cohen--Macaulay $R$-module, which is not isomorphic to the canonical module $I$.
The equality $(t^6)^3-(t^5)^2(t^8)=0$ shows that $I$ is not integrally closed.
Calculating the socle of $R/\m I$, we can verify that $I$ is not weakly $\m$-full.
\end{eg}

The ideal $I$ in Example \ref{it} guarantees that the assumption in Corollaries \ref{cor1}, \ref{cor2} (and Theorem \ref{thmint}) that $I$ is either integrally closed or weakly $\m$-full is necessary.
It also assures that the weak $\m$-fullness assumption in Corollary \ref{cor3} cannot be removed.
The ideals $I$ in Examples \ref{eg1}, \ref{eg2} show the necessity of the depth assumption in Corollaries \ref{cor1}, \ref{cor2}, \ref{cor3} (and the assumption $I:\m\ne I$ in Theorem \ref{thmint}).
Also, the maximal ideal $\m$ in Example \ref{it} says that the finiteness assumption of injective dimension in Corollary \ref{cor3} cannot be extracted.
Thus, each of the three assumptions in Corollary 3.18 is necessary.

\section*{Acknowledgments}
We are grateful to C\u{a}t\u{a}lin Ciuperc\u{a} for useful discussions on $\fm$-full ideals and his suggestions for the definition of weakly $\fm$-full ideals. We are also grateful to Shiro Goto and Naoki Taniguchi for their valuable comments related to the manuscript.

\end{document}